\documentclass[11pt]{amsart}
\usepackage[letterpaper,margin=1in]{geometry}
\usepackage[T1]{fontenc}
\usepackage{lmodern}
\usepackage{amsmath,amssymb,amsthm,mathtools}
\usepackage{microtype}
\usepackage{needspace}
\usepackage{tikz}
\usepackage[colorlinks=true,linkcolor=blue!45!black,citecolor=blue!45!black,urlcolor=blue!45!black]{hyperref}
\numberwithin{equation}{section}
\newtheorem{theorem}{Theorem}[section]
\newtheorem{proposition}[theorem]{Proposition}
\newtheorem{lemma}[theorem]{Lemma}

\theoremstyle{remark}
\newtheorem{remark}[theorem]{Remark}
\newcommand{\Fq}{\mathbb F_q}
\newcommand{\Fqq}{\mathbb F_{q^2}}
\newcommand{\Aq}{\mathcal A_q}
\newcommand{\Aqq}{\mathcal A_{q^2}}
\newcommand{\Hq}{\mathcal H_q}
\newcommand{\Hqq}{\mathcal H_{q^2}}
\newcommand{\eps}{\varepsilon}
\DeclareMathOperator*{\Res}{Res}

\newcommand{\nK}{\mathrm{nK}}
\newcommand{\sumfamily}{\sum_{\substack{\chi\ \mathrm{primitive\ cubic}\\ \operatorname{genus}(\chi)=g}}}
\allowdisplaybreaks[1]
\title[A secondary term in cubic first moments]{A secondary term in the first moment of cubic $L$-functions over function fields}
\author{Ziwei Hong}
\address{School of Mathematics, Renmin University of China, Beijing, P.R. China}
\address{School of Mathematics and Statistics, UNSW Sydney, Sydney, Australia}
\email{hongziwei@live.com}
\date{}
\subjclass[2020]{11M38, 11T24, 11L05, 11M41}
\keywords{Cubic $L$-functions, function fields, secondary terms, twisted moments, Gauss sums, double Dirichlet series}
\hypersetup{pdftitle={A secondary term in the first moment of cubic L-functions over function fields},pdfauthor={Ziwei Hong},pdfsubject={Cubic twisted first moments and explicit secondary terms},pdfkeywords={cubic L-functions, Gauss sums, secondary terms, twisted moments}}
\begin{document}
\begin{abstract}
Let $q$ be a fixed odd prime power with $q\equiv2\pmod3$. We obtain an explicit secondary term of order $q^{5g/6}$ in the first moment of primitive cubic $L$-functions over $\Fq(T)$, with error $O_{q,\eps}(q^{(23/28+\eps)g})$. Its real coefficient is periodic in $g$ modulo $6$ and is not identically zero. The proof continues the double Dirichlet series of Hong and Fang beyond the secondary poles by summing over the variable introduced by M\"obius inversion before taking absolute values. The residues are evaluated using the formulas of David, Florea and Lal\'in. The same method gives a uniform twisted first moment with error $O_{q,\eps}(q^{(23/28+\eps)g}|h|^{5/14+\eps})$.
\end{abstract}
\maketitle

\section{Introduction}

Throughout the paper, $q$ is a fixed odd prime power satisfying $q\equiv2\pmod3$. We consider
\begin{equation}\label{eq:moment}
\sumfamily L_q\!\left(\frac12,\chi\right),
\end{equation}
where the genus follows the convention in \cite{DFL,HF}. In the present non-Kummer family, the conductor has degree $g+2$ and $g$ is even. Our aim is to separate the secondary contribution of size $q^{5g/6}$ from the error term.

\subsection{Earlier work}
Over the Eisenstein field, Luo \cite{Luo} obtained a first moment asymptotic for a family of cubic Hecke characters. For primitive cubic Dirichlet characters over $\mathbb Q$, Baier and Young \cite{BY} proved a smoothed first moment formula with error $O(X^{37/38+\eps})$. Gao and Zhao \cite{GZ} subsequently obtained the exponent $7/8$ by double Dirichlet series under the generalized Lindel\"of hypothesis.

More recently, Hamdar \cite{Hamdar} separated a secondary term of order $X^{5/6}$, with error $O(X^{4/5+\eps})$ at the central point, for the thin family of cubic Hecke characters attached to squarefree Eisenstein integers congruent to $1$ modulo $9$.

David, Florea and Lal\'in \cite{DFL} proved the first moment asymptotic
\begin{equation}\label{eq:dfl-first}
\sumfamily L_q\!\left(\frac12,\chi\right)
=q^{g+2}\frac{\zeta_q(3/2)}{\zeta_q(3)}
\mathcal A_{\nK}(q^{-2},q^{-3/2})
+O_{q,\eps}\!\left(q^{(7/8+\eps)g}\right),
\end{equation}
where $\zeta_q(s)=(1-q^{1-s})^{-1}$ and $\mathcal A_{\nK}$ is an explicit Euler product. They also observed that a contribution of size $q^{5g/6}$ could be obtained by evaluating additional residues, but left it within the larger error term.

Hong and Fang \cite{HF} introduced a double Dirichlet series $A_3$ for the same family and recovered \eqref{eq:dfl-first} by analytic continuation and coefficient extraction. We follow their notation
\begin{equation}\label{eq:variables}
u=q^{-2s},\qquad v=q^{-w},\qquad A_3(u,v)=A_3(s,w).
\end{equation}
Their continuation reaches $\Re s>7/8$, short of the known secondary poles at $\Re s=5/6$. We enlarge this region to cross those poles and identify their residues in $A_3$.

\subsection{The main result}
We use the constants of David, Florea and Lal\'in \cite{DFL}. All products below run over monic irreducible polynomials $P\in\Fq[T]$, and $|P|=q^{\deg P}$. The factor in their first main term is
\begin{equation}\label{eq:AnK}
\begin{split}
\mathcal A_{\nK}(q^{-2},q^{-3/2})
={}&\prod_{\substack{P\\\deg P\text{ odd}}}\frac1{1+|P|^{-2}}\\
&\times\prod_{\substack{P\\\deg P\text{ even}}}
\frac{1+2|P|^{-1}-2|P|^{-5/2}}{(1+|P|^{-1})^2}.
\end{split}
\end{equation}
To specify the secondary term, we use the function $\mathcal K_{\nK}$
defined in \cite[(67)]{DFL}. We record its Euler product from
\cite[(66)--(67)]{DFL} in the present notation:
\begin{equation}\label{eq:KnK}
\begin{split}
\mathcal K_{\nK}(z)
={}&\prod_{\substack{P\\d=\deg P\text{ odd}}}
\frac{1-z^{2d}|P|^{-5/3}}
{(1+|P|^{-2})(1-z^{3d}|P|^{-3/2})}\\
&\times\prod_{\substack{P\\d=\deg P\text{ even}}}
\frac{(1-z^d|P|^{-5/6})
(1+2|P|^{-1}+z^d|P|^{-5/6}-2z^{3d}|P|^{-5/2})}
{(1+|P|^{-1})^2(1-z^{3d}|P|^{-3/2})}.
\end{split}
\end{equation}
As noted in \cite[p.~46]{DFL}, this product converges absolutely for
$|z|<q^{1/6}$, and the convergence is locally uniform there.

\begin{theorem}\label{thm:main}
Fix an odd prime power $q\equiv2\pmod3$, and let $0<\eps<1/84$. As $g$ tends to infinity through the even integers,
\begin{equation}\label{eq:main}
\begin{split}
\sumfamily L_q\!\left(\frac12,\chi\right)
={}&q^{g+2}\frac{\zeta_q(3/2)}{\zeta_q(3)}
\mathcal A_{\nK}(q^{-2},q^{-3/2})\\
&+q^{5g/6+5/3}\sum_{j=0}^2c_j\omega^{-j(g/2+1)}
+O_{q,\eps}\!\left(q^{(23/28+\eps)g}\right),
\end{split}
\end{equation}
where $\omega=e^{2\pi i/3}$ and
\begin{equation}\label{eq:cj}
c_j=\frac{(1+q^{1/3}\omega^j)\mathcal K_{\nK}(\omega^{-j})}
{3\zeta_q(3)(1-q^{1/6}\omega^{-j})},\qquad 0\le j\le2.
\end{equation}
The coefficients satisfy $c_0<0$ and $c_2=\overline{c_1}$. In particular, the real periodic coefficient
$\sum_{j=0}^2c_j\omega^{-j(g/2+1)}$ is not identically zero on the even integers.
\end{theorem}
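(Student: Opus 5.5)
We follow the double Dirichlet series approach of Hong and Fang. We write the moment \eqref{eq:moment} as a coefficient of $u^{g/2+1}$ (in the variables \eqref{eq:variables}) of $A_3(u,v)$ at $v=q^{-1/2}$. This uses the approximate functional equation, or equivalently the generating series over conductors of the family. The first step is to redo the Möbius inversion that removes the squarefree condition. This produces an auxiliary sum over a polynomial $d$ in front of cubic Gauss-sum series of the form $\sum_f G(\chi_f)/|f|^{s}$. We then extend the meromorphic continuation of $A_3$ past $\Re s=5/6$.

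The key point is to sum over $d$ \emph{before} taking absolute values. Each $d$-term contributes a shifted copy of the Kubota-type Gauss-sum series, which has poles at $u^3=q^{-5}$, that is at $\Re s=5/6$. Summed over $d$, these residues assemble into an Euler product, and we expect it to match $\mathcal K_{\nK}(z)$ of \eqref{eq:KnK} with $z$ a cube root of $q^{5/3}u$. The remaining contribution, after residues are removed, is controlled by a convexity or Lindelöf-on-average bound for the Gauss-sum series in $\Re s<5/6$. Interpolated against the trivial bound in $|d|$, this should continue $A_3$ holomorphically to $\Re s>23/28$ with polynomial growth. The exponent $23/28$ should come from optimizing a truncation parameter $\deg d\le Y$ between the tail bound and the bound on the continued remainder.

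The second step is coefficient extraction. We shift the contour in $u$ from a small circle to $|u|=q^{-2(23/28+\eps)}$. We pick up the primary pole at $u=q^{-2}$, which gives the DFL main term, exactly as in \cite{HF}. We also pick up three secondary poles at $u=\omega^{j}q^{-5/3}$. Writing the secondary pole factor as $(1-q^{5/3}u\,\omega^{-j}\cdots)$ and expanding with partial fractions, $1/(1-q^5u^3)=\frac13\sum_j 1/(1-\omega^{-j}q^{5/3}u)$, produces the factor $1/3$ and the twist $\omega^{-j(g/2+1)}$. The factors $(1+q^{1/3}\omega^j)$ and $(1-q^{1/6}\omega^{-j})^{-1}$ come from the functional-equation dual side and the geometric sum in $v$ at $v=q^{-1/2}$. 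The residue computations are taken from DFL's formulas (66)--(67) rather than redone; we only verify that our residue of $A_3$ equals theirs. The remaining integral is $O(q^{(23/28+\eps)g})$.

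Finally, we check the coefficients. Complex conjugation sends $\omega\mapsto\omega^{-1}$, and $\mathcal K_{\nK}$ has real Euler factors, so $c_2=\overline{c_1}$ follows. For $c_0$, the numerator is $(1+q^{1/3})\mathcal K_{\nK}(1)$. Here $\mathcal K_{\nK}(1)>0$ because each Euler factor is positive when $q\ge5$, since $|P|^{-5/6}$ is small. The denominator contains $1-q^{1/6}<0$, so $c_0<0$. For non-vanishing, the sum $S(g)=\sum_j c_j\omega^{-j(g/2+1)}$ summed over three consecutive values of $g/2$ equals $3c_0\ne0$. Hence $S$ is not identically zero.

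The main obstacle is the continuation step. We need a bound for the $d$-sum of remainders, uniform in $d$, that beats the previous $7/8$ exponent. We would first try the Lindelöf-type mean value bound for cubic Gauss sums over $\Fq[T]$, using the Riemann hypothesis for the relevant curves, together with the functional equation of the Kubota series.
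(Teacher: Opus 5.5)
Your reduction to coefficient extraction, the partial-fraction origin of the factor $1/3$ and of $\omega^{-j(g/2+1)}$, and your checks $c_2=\overline{c_1}$, $c_0<0$, and ``three consecutive values of $g/2$ sum to $3c_0$'' are all fine. The genuine gap is the continuation to $\Re s>23/28$. You call it the main obstacle and leave it open, and the mechanism you sketch contains no estimate in which the $d$-sum actually cancels. A convexity bound for the Gauss-sum series combined with trivial bounds in $d$ extracts nothing from $\mu(d)$.

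In the paper, ``sum over $D$ first'' has content only where $H(ND,k)$ can be opened, namely $\Re(s+w)>3/2$. There, for each fixed Gauss-sum modulus $F=EG$, the sums over $N$ and $D$ collapse to the Euler product $\mathcal S_F(x,y)$. This product equals $L_q(x,\chi)L_q(y,\chi)^{-1}$ times a product that converges for $|y|<1$, $q|xy|<1$. Lindel\"of on $|x|=q^{-1/2}$ and the RH bound for $1/L_q(y,\chi)$ on $|y|=q^{-1/2-\delta}$ (Lemma~\ref{lem:inverseL}) then give the block bound $q^{n/2+(3/2+\eps)m}$ at $\Re k=3/2+\kappa$. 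This is square-root cancellation in $\mu(D)\chi(D)$.

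No Lindel\"of-on-average input below $\Re s=5/6$ is used. At $\Re k=2/3+\kappa$ the paper takes only the convexity bound of Lemma~\ref{pre:gauss-series} with trivial counting in $N,D$. Hadamard's three circles theorem in $z=q^{-2k}$ then interpolates the two endpoints (Proposition~\ref{prop:averaged}).

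Nor does $23/28$ come from optimizing a truncation $\deg d\le Y$. You give no computation producing that number, and the paper never truncates. The interpolated exponents give normal convergence on the tube over $R_{3,D}$, whose corner is $G=(5/4,-1/6)$. After gluing with the initial tubes, Bochner's tube theorem continues the function to the convex hull. The segment $14\sigma+9\tau=16$ from $A=(1/2,1)$ to $G$ meets $\tau=1/2$ at $\sigma=23/28$.

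A second, smaller gap is in the residue step. You propose to sum the $d$-residues at $w=1/2$ and compare with DFL. However, near the pole $\Re(s+w)=4/3$ the block bound has first exponent $\Re w-7/30$, which is positive at $w=1/2$, so termwise evaluation is not justified there. The paper evaluates the residue termwise only for $\Re w<0$. It identifies the resulting arithmetic sum with $\mathcal K_{\nK}(q^{w-1/2}\zeta)/(1-q^{w-1/3}\zeta)$ via DFL (66)--(67). It then carries the identity to $w=1/2$ by the identity theorem along the polar divisor, which lies in $R_4$ for $\Re w<8/15$. Your phrase ``we only verify that our residue equals theirs'' hides exactly this step.
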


The periodic coefficient has average $c_0<0$ over the three admissible
genus classes, so at least one class has a nonzero secondary term.
The exponent gap is $5/6-23/28=1/84$. All estimates are for fixed $q$.

\subsection{Twisted first moments}
Let $\mathcal C_3(g)$ denote the family of primitive cubic characters
in \eqref{eq:moment}, and put $m=g/2+1$. For a monic $h\in\Fq[T]$,
write $h=h_1h_2^2h_3^3$, where $h_1,h_2$ are squarefree and coprime.
The same continuation argument gives a uniform twisted first moment,
including its secondary residues.

\begin{theorem}\label{thm:twisted}
With $C_q(h)$ and $c_j(h)$ defined in \eqref{tw:C} and \eqref{tw:local-secondary}, respectively, for every sufficiently small $\eps>0$, uniformly in monic $h\in\Aq$
and integers $m=g/2+1\ge1$,
\begin{equation}\label{tw:theorem}
 \begin{split}
 \sum_{\chi\in\mathcal C_3(g)}
 \chi(h)L_q(1/2,\chi)
 ={}&q^{2m}C_q(h)
   +q^{5m/3}\sum_{j=0}^2c_j(h)\omega^{-jm}\\
 &+O_{q,\eps}\!\left(q^{(23/14+\eps)m}|h|^{5/14+\eps}\right).
 \end{split}
\end{equation}
Furthermore,
\begin{equation}\label{tw:secondary-bound}
 |c_j(h)|\ll_{q,\eps}
 |h_1|^{-1/2}|h_2|^{-1/3}|h|^{\eps},\qquad 0\le j\le2.
\end{equation}
All three $c_j(h)$ are nonzero, $c_0(h)<0$, and
$c_2(h)=\overline{c_1(h)}$.
\end{theorem}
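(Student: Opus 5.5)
We prove Theorem~\ref{thm:twisted} by running the proof of Theorem~\ref{thm:main} with the extra factor $\chi(h)$. Start from the twisted double Dirichlet series
\[
A_3(s,w;h)=\sum_{\chi}\chi(h)L_q(s,\chi)\,|\mathrm{cond}\,\chi|^{-w},
\]
written in the variables \eqref{eq:variables}. Open $L_q(s,\chi)$ as a Dirichlet series in $f$, so that $\chi(h)\chi(f)=\chi(hf)$. Use M\"obius inversion over the squarefree non-Kummer conductors; this introduces a variable $a$. Then express the cubic character sums through cubic Gauss sums via the formulas of \cite{DFL}.

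\textbf{Step 1: terms with $hf$ a cube.} These give the polar part at $v=q^{-1}$ (that is, $w=1$). Its residue, at $u=q^{-1}$ (the central point $s=1/2$), produces $q^{2m}C_q(h)$. The local factors depend on $h$ only through $h_1,h_2,h_3$ at the primes dividing $h$.

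\textbf{Step 2: the Gauss-sum side.} For fixed $a$, the generating series of cubic Gauss sums $G(hf,\chi)$ has its Kubota-type poles at $\Re s=5/6$ in $u$, in the coordinates of \cite{HF}. The residue there is a product of three pieces: a local factor at $h$ coming from $g(h_1h_2^2,\cdot)$, the Euler product $\mathcal K_{\nK}$, and the cube roots of unity $\omega^{-j}$ from the three poles on the circle $|v|=q^{-5/6}$. Summing these residues over $a$ first, before taking absolute values, gives $c_j(h)$. This is the key device of the main proof.

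Outside the poles we bound $A_3(\cdot;h)$ in the strip $\Re s>23/28$. Convexity for the Gauss-sum series in the $h$-aspect gives polynomial growth $|h|^{5/14+\eps}$. Coefficient extraction by a Cauchy integral on $|v|=q^{-(23/14+\eps)}$ then gives the stated error.

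The bound \eqref{tw:secondary-bound} follows from the same residue computation. The Gauss-sum residue at a prime power $P^k\,\|\,h$ has size $|P|^{-1/2}$ when $k\equiv1$, size $|P|^{-1/3}$ when $k\equiv2$, and $O(1)$ when $k\equiv0\pmod3$. The convergent product over $P\mid h$ contributes at most $|h|^\eps$.

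\textbf{Step 3: signs and nonvanishing.} The relation $c_2(h)=\overline{c_1(h)}$ comes from complex conjugation $\chi\mapsto\bar\chi$ together with the real-valuedness of the coefficients. For $j=0$, $c_0(h)$ is $c_0$ from Theorem~\ref{thm:main} times an explicit local factor at $h$. We check that this local factor is positive, so $c_0(h)<0$.

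For $j=1,2$, nonvanishing reduces to three facts:
\begin{itemize}
\item $1+q^{1/3}\omega^j\neq0$;
\item $\mathcal K_{\nK}(\omega^{-j})\neq0$, because each Euler factor is nonzero for $|z|=1$, since $|P|^{-5/6}<1/2$ for $q\ge5$ and all $\deg P\ge1$ (for $q=2$ ... but $q$ is odd);
\item the local factor at $h$ is nonzero, since each of its Euler factors is $1+O(|P|^{-1/6})$.
\end{itemize}

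\textbf{Main obstacle.} The hardest step is the uniform continuation in Step 2 to $\Re s>23/28$ with the explicit $h$-dependence $|h|^{5/14}$. The $h$-twist shifts the Gauss-sum series to $G(ah,\cdot)$. We must get the Lindel\"of-type convexity bound on the lines $\Re=23/28$ uniformly in $h$, and keep the $a$-sum convergent after subtracting the residues. I would interpolate by Phragm\'en--Lindel\"of between $\Re s=1+\eps$ (trivial bound) and the functional-equation line (conductor $\asymp|ah|$), and choose the exponent to balance against $q^{5m/3}$.
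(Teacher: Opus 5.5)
\textbf{There is a genuine gap at the step you flag as the main obstacle, and your proposed fix would not close it.} A one-variable Phragm\'en--Lindel\"of argument on the slice $w=1/2$ can neither continue the series to $\Re s>23/28$ nor produce the exponent $5/14$.

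\emph{Continuation.} Bounding each Gauss-sum series individually, even with the convexity bound of Lemma~\ref{pre:gauss-series}, is the Hong--Fang argument, and it stops at $\Re s>7/8$. That does not cross the poles at $5/6$. The paper opens $H(NDh^2,k)$ and sums over the M\"obius variable $D$ \emph{before} taking absolute values. The $D$-sum becomes $1/L_q(y,\chi)$, which Lemma~\ref{lem:inverseL} controls, and this gives the right-endpoint estimate \eqref{tw:right-endpoint}. Interpolating with \eqref{tw:left-endpoint} gives \eqref{tw:block}, hence normal convergence on $R_{3,D}$, and Bochner's theorem then fills in the hull $R_4$. You use the ``sum over $D$ first'' device only for the residues, where it is not needed, since the $D$-sum of residues converges absolutely.

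\emph{The $h$-aspect.} Since $23/28$ is the edge of the known domain on $w=1/2$, there is no line to its left with a polynomial bound in $|h|$ to interpolate against. The paper leaves the central slice and applies the three lines theorem along the segment from $A_\delta\approx(1/2,1)\in R_1$ to $G_\delta\approx(5/4,-1/6)\in R_{3,D}$. At $A_\delta$ one has $F_h\ll|h|^\eps$ by \eqref{tw:R1-bound}. At $G_\delta$, \eqref{tw:block} gives $|h|^{3-2\Re(s+w)}=|h|^{5/6}$. The segment meets $w=1/2$ at $\lambda=3/7$, so the exponent is $\frac37\cdot\frac56=\frac5{14}$. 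It is forced by this geometry, not chosen to balance against $q^{5m/3}$. Note also that the dual series contains $H(NDh^2,k)$, because $\chi_F(h)=\overline{\chi_F(h^2)}$. With conductor $|ah|$, as you propose, the $h$-exponent would come out halved.

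\textbf{The residue step has a second gap: $c_j(h)$ is not a convergent sum at the central point.} After inserting Lemma~\ref{pre:rational-residue}, the $N$-sum behaves like $\sum_{N_1}|N_1|^{w-4/3}\zeta^{\deg N_1}$. This is a geometric series with ratio $q^{w-1/3}\zeta$, which diverges at $w=1/2$; it is the source of the factor $(1-q^{1/6}\omega^{-j})^{-1}$ in $c_j$. The paper proceeds in three steps:
\begin{enumerate}
\item it takes residues termwise for $\Re w<0$, where \eqref{tw:block} justifies this;
\item it evaluates the resulting series, which converges absolutely for $\Re w<1/3$;
\item it continues the identity along the polar divisor $u=u_\zeta(w)$ to $w=1/2$, which is possible because $R_4$ contains that divisor for $\Re w<8/15$.
\end{enumerate}

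Your local sizes $R^{-1/2}$, $R^{-1/3}$ and $1$ at $P^e\parallel h$ are correct, but they should be derived from the exponent $n+2e$ of $P$ in $NDh^2$. Finally, ``each factor is $1+O(|P|^{-1/6})$'' does not give nonvanishing at small primes. You need the explicit checks $R^{-5/6}<1$ and $R^{-1/3}>R^{-1}$ for the numerators in \eqref{tw:local-secondary}, together with the untwisted argument for $D_{P,j}$.
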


The weighted summation in \eqref{tw:summed-error} permits degrees
up to $(5/12-\delta)m$ with a power saving error relative to the
family size.

\subsection{Outline of the argument}
We apply the functional equation to $A_3(s,w)$ of
Hong and Fang \cite{HF}. Summing over the variable $D\in\Fq[T]$ introduced by
M\"obius inversion before taking absolute values gives an Euler product controlled
by a Dirichlet $L$-function and its reciprocal. Interpolation with
the bound for $H(M,k)$ and Bochner's tube theorem then continue $A_3$ to
$\Re s>23/28$ on $w=1/2$.

The residues are identified using \cite[(66)--(67)]{DFL}, and one
shift of the coefficient contour gives the two main terms. Working with
$A_3(s,w)$ avoids the truncation parameter of an
approximate functional equation. Section~\ref{sec:twisted} extends
the argument to twists and gives uniform estimates for summation
against short Dirichlet polynomials.

\section{Preliminaries}\label{pre:section}

Throughout, $q$ is a fixed odd prime power satisfying $q\equiv2\pmod3$.
All polynomials in sums and products are monic.  We write $\Aq$ and
$\Aqq$ for the monic polynomials over $\Fq$ and $\Fqq$, respectively,
and $\Hq$ and $\Hqq$ for their squarefree subsets.  Set
$|N|=q^{\deg N}$ and $|F|_2=q^{2\deg F}$, and let $\sigma$ denote
the automorphism of $\Fqq[T]$ induced by $a\mapsto a^q$.

\subsection{Characters, functional equations, and Gauss sums}

We use the cubic characters defined in \cite{DFL,HF}.  Fix an isomorphism $\Omega$ between the complex
cube roots of unity and those in $\Fqq$.  For a prime $\pi\in\Fqq[T]$,
the character $\chi_\pi$ is specified by
\[
 \Omega(\chi_\pi(a))\equiv a^{(q^{2\deg\pi}-1)/3}\pmod\pi
 \qquad (\pi\nmid a),
\]
and is zero on multiples of $\pi$.  Extend this definition
multiplicatively in the modulus.  The same notation denotes restriction
to $\Fq[T]$.  If $F\in\Hqq$, $F\ne1$, has no nonconstant divisor in
$\Fq[T]$, this restriction is a primitive, nonprincipal, even cubic
character of conductor $FF^\sigma$.  Conversely these $F$ parametrize
the primitive cubic characters over $\Fq[T]$, with
\begin{equation}\label{pre:genus}
 \operatorname{genus}(\chi_F)=2\deg F-2.
\end{equation}
For a character over $\Fq[T]$, write
\[
 L_q(w,\chi)=\sum_{N\in\Aq}\frac{\chi(N)}{|N|^w},
 \qquad
 L_q(v,\chi)=\sum_{N\in\Aq}\chi(N)v^{\deg N},
 \qquad v=q^{-w}.
\]
For a nonprincipal character these expressions continue as a polynomial
in $v$.  For the character of conductor one we use
$L_q(v,1)=(1-qv)^{-1}$.

Put $p=\operatorname{char}\Fq$.  For a Laurent series
$x=\sum_jx_jT^j$ define
\[
 e_{q^2}(x)=\exp\!\left(
 \frac{2\pi i}{p}\operatorname{Tr}_{\Fqq/\mathbb F_p}(x_{-1})\right),
 \qquad
 G_{q^2}(V,F)=\sum_{a\bmod F}\chi_F(a)e_{q^2}(aV/F).
\]
The following lemmas use the normalization of \cite{DFL}, with
the base field replaced by $\Fqq$.

\begin{lemma}[{\cite[Lemmas 2.1 and 2.12]{DFL}}]
\label{pre:gauss-identities}
The cubic Gauss sums satisfy
\begin{equation}
 G_{q^2}(V,F)=\overline{\chi_F(V)}G_{q^2}(1,F)
 \qquad ((V,F)=1).\label{pre:twist}
\end{equation}
Cubic reciprocity for monic polynomials gives
$\chi_{F_1}(F_2)=\chi_{F_2}(F_1)$.  Consequently,
\begin{equation}\label{pre:gauss-sieve}
 G_{q^2}(N,DF)=G_{q^2}(N,D)G_{q^2}(ND,F)
 \qquad ((D,F)=1).
\end{equation}
If $(V,F)=1$, the Gauss sum vanishes unless $F$ is squarefree; for
squarefree $F$ one has $|G_{q^2}(1,F)|=q^{\deg F}$.
\end{lemma}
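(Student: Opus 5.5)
These identities are standard, and the paper cites them from \cite[Lemmas 2.1 and 2.12]{DFL} with the base field $\Fq$ replaced by $\Fqq$. So the plan is to rerun those short arguments over $\Fqq[T]$ rather than find anything new. The only inputs are the multiplicativity of $\chi_F$ in the modulus, the Chinese remainder theorem, and cubic reciprocity over $\Fqq(T)$. That reciprocity law holds for monic polynomials with no sign or unit factor, because $q^2\equiv1\pmod 3$ and the relevant Hilbert-symbol correction $(-1)^{\frac{q^2-1}{3}\deg F_1\deg F_2}$ is trivial, since $\frac{q^2-1}{3}$ is even for odd $q$.

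\emph{The twist formula \eqref{pre:twist}.} When $(V,F)=1$, the map $a\mapsto aV$ permutes the residues modulo $F$. Substituting $a=bV^{-1}$ in the definition of $G_{q^2}(V,F)$ gives
\[
G_{q^2}(V,F)=\sum_{b}\chi_F(bV^{-1})e_{q^2}(b/F)=\overline{\chi_F(V)}\,G_{q^2}(1,F).
\]

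\emph{The sieve identity \eqref{pre:gauss-sieve}.} For coprime $D$ and $F$, write each residue modulo $DF$ as $a=a_1F+a_2D$, with $a_1$ running modulo $D$ and $a_2$ modulo $F$. Then $aN/(DF)=a_1N/D+a_2N/F$, and the exponential factorizes. The character splits as
\[
\chi_{DF}(a)=\chi_D(a_1F)\,\chi_F(a_2D).
\]
The $a_2$-sum is $\chi_F(D)\,\overline{\chi_F(N)}$-type twisted, but it is cleaner to keep it as a sum: it equals $\chi_F(D)^{-1}G_{q^2}(ND,F)$ after the substitution $a_2\mapsto a_2D^{-1}$. Similarly, the $a_1$-sum equals $\chi_D(F)^{2}G_{q^2}(N,D)$ after substituting $a_1\mapsto a_1F^{-1}$ and using $\chi_D(F)\chi_D(F^{-1})^{-1}$. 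This bookkeeping is the one step where care is needed. The product of the leftover factors is $\chi_D(F)^{\pm1}\chi_F(D)^{\mp1}$, and cubic reciprocity $\chi_D(F)=\chi_F(D)$ makes it equal to $1$. I expect the main (minor) obstacle to be checking that the exponents of these leftover factors really cancel with the stated normalization of $e_{q^2}$. If they do not, I would instead expand $G(N,DF)$ directly with $a=a_1F+a_2D$ and compare term by term with the right-hand side.

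\emph{Vanishing for non-squarefree $F$.} Suppose $P^2\mid F$ with $P$ prime, and write $F=P^2F'$ (assuming first $P\nmid F'$; the general case is the same). Apply \eqref{pre:gauss-sieve} to reduce to a Gauss sum modulo $P^k$ with $k\ge2$, twisted by a unit. Replacing $a$ by $a+tP^{k-1}$, the character $\chi_{P^k}=\chi_P^k$ is unchanged, while the additive character picks up the factor $e_{q^2}(tV'/P)$. Summing over $t$ modulo $P$ gives $0$ because $(V',P)=1$. For the case $k\equiv0\pmod 3$ with trivial character, the sum is a Ramanujan-type sum, which vanishes the same way.

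\emph{Absolute value for squarefree $F$.} By \eqref{pre:gauss-sieve} and \eqref{pre:twist}, $|G_{q^2}(1,F)|$ is a product of the values $|G_{q^2}(V,\pi)|$ over prime $\pi\mid F$, with units $V$. So it suffices to show $|G_{q^2}(V,\pi)|=|\pi|^{1/2}=q^{\deg\pi}$. This is the standard orthogonality computation: expand $|G|^2$ as a double sum and sum over the additive variable, which gives $|\pi|$ for the nontrivial character $\chi_\pi$. Multiplying over the primes gives $|F|_2^{1/2}=q^{\deg F}$, which matches the normalization $|F|_2=q^{2\deg F}$.
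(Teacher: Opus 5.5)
Your route is essentially the paper's. The paper cites \cite{DFL} for the twist formula, reciprocity, the vanishing, and $|G_{q^2}(1,F)|=q^{\deg F}$, and gets \eqref{pre:gauss-sieve} from multiplicativity (your CRT splitting) together with reciprocity.

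The one correction is in the step you flagged.
\begin{itemize}
\item Since $\chi_D(a_1F+a_2D)=\chi_D(a_1F)$, the $a_1$-sum is $\chi_D(F)G_{q^2}(N,D)$, not $\chi_D(F)^2G_{q^2}(N,D)$. Your substitution $a_1\mapsto a_1F^{-1}$ produces $G_{q^2}(NF^{-1},D)$, which again equals $\chi_D(F)G_{q^2}(N,D)$.
\item With the exponent $2$, the leftover factor would be $\chi_D(F)^2\chi_F(D)^{-1}$. After reciprocity this is $\chi_F(D)$, not $1$, so as written the identity would not follow.
\item With the correct value, the leftover is $\chi_D(F)\chi_F(D)^{-1}=1$. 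Your $a_2$-sum value $\chi_F(D)^{-1}G_{q^2}(ND,F)$ was already right, so \eqref{pre:gauss-sieve} holds.
\end{itemize}

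A minor notational point: in the modulus computation, $|\pi|^{1/2}$ should read $|\pi|_2^{1/2}$ in the paper's normalization.
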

\begin{proof}
Use \cite[(22) and Lemmas~2.1 and~2.12]{DFL};
\eqref{pre:gauss-sieve} follows by multiplicativity and reciprocity.
\end{proof}

\begin{lemma}[{\cite[Lemma 2.3 and Corollary 2.4]{DFL}; \cite[Lemma 2.2]{HF}}]
\label{pre:functional-equation-lemma}
For an admissible $F\ne1$, the even functional equation takes the form
\begin{equation}\label{pre:functional-equation}
 L_q(w,\chi_F)
 =\frac{q^{2w-1}(1-q^{-w})}{1-q^{w-1}}
   \frac{G_{q^2}(1,F)}{|F|_2^w}
   L_q(1-w,\overline{\chi_F}).
\end{equation}
This remains a rational identity for $F=1$ with the convention
$L_q(w,1)=(1-q^{1-w})^{-1}$.
\end{lemma}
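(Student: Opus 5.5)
The plan is to deduce \eqref{pre:functional-equation} from the standard functional equation for even primitive characters over $\Fq[T]$, and then to rewrite the root number through the cubic Gauss sum over $\Fqq$. Let $F\ne1$ be admissible, let $\chi=\chi_F$ be its restriction to $\Fq[T]$, and put $n=\deg(FF^\sigma)=2\deg F$.

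\emph{Reduction to the completed polynomial.} Since $\chi$ is even, primitive and nonprincipal, $L_q(v,\chi)$ is a polynomial of degree $n-1$ in $v$ that vanishes at $v=1$. I will write
\[
 L_q(v,\chi)=(1-v)L^*(v,\chi),
\]
where $L^*$ has degree $n-2$. The Riemann hypypothesis setup of \cite[Lemma~2.3]{DFL} (the Weil functional equation for the associated curve) gives
\[
 L^*(v,\chi)=\eps(\chi)\,(\sqrt q\,v)^{n-2}L^*\!\left(\tfrac1{qv},\overline\chi\right),
 \qquad |\eps(\chi)|=1.
\]
I then substitute $v=q^{-w}$ and restore the factors $(1-v)$ and $(1-1/(qv))$. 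Because $1-1/(qv)=1-q^{w-1}$, this produces exactly the factor
\[
 \frac{q^{2w-1}(1-q^{-w})}{1-q^{w-1}},
\]
times $\eps(\chi)\,q^{(n-2)/2}q^{-(n-2)w}$ and $L_q(1-w,\overline\chi)$. The powers of $q$ must be tracked carefully at this step: the exponent $2w-1$ should absorb the shift from $n-2$ to $n$.

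\emph{Identifying the root number.} This is the main point. The claim needed is
\[
 \eps(\chi)\,q^{(n-2)/2}q^{2w-1}q^{-(n-2)w}=q^{2w-1}\,\frac{G_{q^2}(1,F)}{|F|_2^{w}}\cdot(\text{matching power}),
\]
which amounts to $\eps(\chi)=q^{-n/2}\,G_{q^2}(1,F)$, up to the normalization of $\eps$ chosen above. Here I would invoke \cite[Corollary~2.4]{DFL}, which expresses the root number of $\chi_F$ through the Gauss sum of the conductor $FF^\sigma$ over $\Fq$. I would combine it with the Hasse--Davenport-type identification in \cite[Lemma~2.12]{DFL}, which equates that $\Fq$-Gauss sum with $G_{q^2}(1,F)$; this is where the hypothesis $q\equiv2\pmod3$ enters, since $\Fq$ then lacks cube roots of unity. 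A consistency check is available from Lemma~\ref{pre:gauss-identities}: it gives $|G_{q^2}(1,F)|=q^{\deg F}=q^{n/2}$, which matches $|\eps(\chi)|=1$. Finally, $|F|_2^w=q^{2w\deg F}=q^{nw}$, so the powers of $q$ agree. This bookkeeping is the step most likely to contain a normalization discrepancy, and I would cross-check it against \cite[Lemma~2.2]{HF}.

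\emph{The case $F=1$.} With $G_{q^2}(1,1)=1$, the right-hand side equals
\[
 \frac{q^{2w-1}(1-q^{-w})}{(1-q^{w-1})(1-q^{w})}.
\]
Using $(1-q^{-w})/(1-q^w)=-q^{-w}$, this simplifies to
\[
 \frac{-q^{w-1}}{1-q^{w-1}}=\frac1{1-q^{1-w}},
\]
which equals $L_q(w,1)$. Hence the identity holds as a rational identity in this case as well.
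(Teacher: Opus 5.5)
Your proposal is correct and follows the paper's route. Like the paper, you use the standard even functional equation with root number $\eps(\chi_F)=q^{-\deg F}G_{q^2}(1,F)$, which the paper takes from the proof of \cite[Lemma~4.2]{DFL}, and you check $F=1$ by direct substitution. Your intermediate sentence counts $q^{2w-1}$ twice, but the bookkeeping $\eps(\chi)q^{(n-2)(1/2-w)}=q^{2w-1}\eps(\chi)q^{n/2}|F|_2^{-w}$ leads to exactly the root-number identity you state, so the argument is unaffected.
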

\begin{proof}
The root number is $q^{-\deg F}G_{q^2}(1,F)$ by
\cite[proof of Lemma~4.2]{DFL}; see also \cite[Lemma~2.2]{HF}.
The case of conductor $1$ follows by substitution.
\end{proof}

\begin{lemma}[{\cite[Lemma 2.6]{DFL}}]\label{pre:lindelof-lemma}
For every primitive nonprincipal cubic character $\chi$ over $\Fq[T]$
and every $\eps>0$, one has
\begin{equation}\label{pre:lindelof}
 |L_q(w,\chi)|\ll_{q,\eps}q^{\eps\deg\operatorname{cond}(\chi)}
 \qquad (\Re w\ge\tfrac12),
\end{equation}
uniformly in $\Im w$.
\end{lemma}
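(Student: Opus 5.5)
The plan is to deduce \eqref{pre:lindelof} from the Riemann hypothesis for curves (Weil), combined with an explicit formula: short prime sums are bounded trivially, long ones by RH. This is the route of \cite[Lemma 2.6]{DFL}.

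\emph{Step 1: factorization.} Let $d=\deg\operatorname{cond}(\chi)$. Since $\chi$ is primitive and nonprincipal, $L_q(v,\chi)$ is a polynomial in $v=q^{-w}$ of degree $d-1$. For even $\chi$ it has the trivial factor $(1-v)$. On $\Re w\ge\tfrac12$ we have $|v|\le q^{-1/2}$, so $|1-v|\le 2$ and this factor is harmless; we drop it. By Weil,
\[
L_q(v,\chi)=\prod_{j=1}^{d'}(1-\alpha_jv),\qquad |\alpha_j|=q^{1/2},\quad d'\le d.
\]
By the maximum modulus principle in $v$, it suffices to bound $|L_q(v,\chi)|$ on the circle $|v|=q^{-1/2}$. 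The bound is then automatically uniform in $\Im w$.

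\emph{Step 2: move slightly inside the circle.} For $|z|=1$ and $0<\rho<1$, put $x=\Re z$. Then
\[
|1-z|^2=2-2x,\qquad |1-\rho z|^2=(1-\rho)^2+2\rho(1-x)\ge 2\rho(1-x),
\]
so $|1-z|\le\rho^{-1/2}|1-\rho z|$. Applying this with $z=\alpha_jv$ and multiplying over $j$ gives
\[
|L_q(v,\chi)|\le \rho^{-d/2}|L_q(\rho v,\chi)|.
\]
We choose $\rho=q^{-2\eps}$, so that $\rho^{-d/2}=q^{\eps d}$.

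\emph{Step 3: the explicit formula at radius $\rho q^{-1/2}$.} Write
\[
\log L_q(u,\chi)=\sum_{n\ge1}\frac{c_n}{n}u^n,\qquad c_n=\sum_{\deg f=n}\Lambda(f)\chi(f)=-\sum_j\alpha_j^n .
\]
There are two bounds on $c_n$: trivially $|c_n|\le q^n$, and by RH $|c_n|\le d\,q^{n/2}$. Take $u=\rho v$ with $|u|=\rho q^{-1/2}$ and split at $N=\lfloor\eps d\rfloor$.
\begin{itemize}
\item For $n\le N$, use the trivial bound. The contribution is at most $\sum_{n\le N}q^{n/2}/n\ll q^{\eps d/2}$.
\item For $n>N$, use RH. The contribution is at most $d\sum_{n>N}\rho^n/n\le d\rho^N/\bigl(N(1-\rho)\bigr)$. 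Since $\rho^N=q^{-2\eps^2 d}$ decays exponentially in $d$, this is $O_{q,\eps}(1)$.
\end{itemize}
Hence $\log|L_q(\rho v,\chi)|\le q^{\eps d/2}$, which is far too large. So the head must be handled more carefully.

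\emph{Step 4: the main obstacle.} The difficulty is that the trivial bound on the short prime sums loses an exponential factor. The fix is to let $N$ grow only logarithmically, taking $N=\lfloor c\log d\rfloor$ with $c$ depending on $\eps$. The head is then $\ll q^{N/2}=d^{O(1)}$. The tail is $\le d\rho^{N}/(1-\rho)$, and it is $\ll\eps d$ once $\rho^{N}\le\eps$.

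I expect the balancing of $\rho$ against $N$ to be the only delicate point. Adjusting $\rho$ toward $1$ at rate $1/\log d$ should balance the loss $\rho^{-d/2}$ against the tail. This yields $\log|L|\le(\eps+o(1))\,d\log q$, which is \eqref{pre:lindelof}.
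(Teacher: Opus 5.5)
Your argument is correct in substance. It differs from the paper, which gives no proof and simply imports the bound from \cite[Lemma~2.6]{DFL}. You derive it directly from Weil's theorem.

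The key device is your Step~2 inequality $|1-z|\le\rho^{-1/2}|1-\rho z|$ for $|z|=1$. It moves the problem from the critical circle $|v|=q^{-1/2}$, where $\sum_n c_nv^n/n$ cannot be bounded absolutely, to the circle $|v|=\rho q^{-1/2}$, at a cost of $\rho^{-d/2}=q^{\eps d}$. On that smaller circle your Steps~3--4 are the same computation the paper performs in Lemma~\ref{lem:inverseL}: the trivial bound for short prime sums, RH for long ones, and a split near $\log_q$ of the conductor degree. That computation gives $|\log L|=o(d)$, which controls both $|L|$ and $|L|^{-1}$ there. So your route is self-contained and shows that \eqref{pre:lindelof} and \eqref{eq:inverseL} come from one estimate; the citation buys only brevity.

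Step~4 is imprecise as written and should be tightened.
\begin{itemize}
\item A head of size $d^{O(1)}$ is not enough; it must be $o(d)$. Take $N=\lfloor\log_q d\rfloor$, so the head is at most $\sum_{n\le N}q^{n/2}\ll\sqrt d$. The constraint on $c$ therefore comes from $q$, not from $\eps$.
\item Keep $\rho=q^{-2\eps}$ fixed. The tail is then at most $d\sum_{n>N}\rho^n/n\le d/\bigl((N+1)(1-\rho)\bigr)=O_{q,\eps}(d/\log d)$.
\item Together these give $\log|L_q(v,\chi)|\le\log 2+\eps d\log q+o(d)$, and \eqref{pre:lindelof} follows after rescaling $\eps$.
\end{itemize}
No balancing of $\rho$ against $N$ is needed. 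Letting $\rho\to1$ matters only if you want a quantitative $o(d)$. Also, the condition for your tail bound $d\rho^N/(1-\rho)$ to be at most $\eps d$ is $\rho^N\le\eps(1-\rho)$, not $\rho^N\le\eps$; it holds once $d$ is large.
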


\begin{lemma}[{\cite[Lemma 4.4]{DFL}}]\label{pre:rational-phase}
If $D\in\Hq$ has degree $m$, then
\[
 G_{q^2}(1,D)=q^m,
 \qquad
 G_{q^2}(N,D)=q^m\quad (N\in\Aq, (N,D)=1).
\]
\end{lemma}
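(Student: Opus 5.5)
The statement to prove is Lemma~\ref{pre:rational-phase}: for squarefree $D\in\Hq$ of degree $m$, $G_{q^2}(1,D)=q^m$, and more generally $G_{q^2}(N,D)=q^m$ whenever $N\in\Aq$ is coprime to $D$. The second claim follows from the first together with \eqref{pre:twist}. For rational $N\in\Fq[T]$ and $D$ the restriction of $\chi_D$ to $\Fq[T]$ is principal away from $D$: if $D$ is viewed in $\Fqq[T]$ and $P\mid D$ is irreducible over $\Fq$, then either $P$ stays irreducible of even degree $2d$ over $\Fqq$, or $P=\pi\pi^\sigma$. In both cases $\chi_P(N)=1$ for $N\in\Fq[T]$.

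\textbf{Why $\chi_D(N)=1$.} In the split case, $\chi_\pi(N)\chi_{\pi^\sigma}(N)$ corresponds to $N^{(q^{2e}-1)/3}\cdot\sigma(N^{(q^{2e}-1)/3})$ mod the respective primes. Using $\sigma(N)=N$, the two factors are $\Omega$-conjugate under $x\mapsto x^q$. Since $q\equiv2\pmod3$, this map inverts cube roots of unity, so the product is $1$.

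In the inert case, $P$ has degree $d$ over $\Fq$ with $d$ odd, and $P$ splits over $\Fqq$ as $\pi\pi^\sigma$ with $\deg\pi=d$. Then $N^{(q^{2d}-1)/3}=(N^{q^d-1})^{(q^d+1)/3}$. Here $N^{q^d-1}\equiv1\pmod P$ because $N$ lies in the field $\Fq[T]/P$ of size $q^d$, and $3\mid q^d+1$ for $d$ odd. Hence $\chi_D(N)=1$.

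So the $d$ odd case reduces to the split-type product, and the $d$ even case to the inert-type computation; I will reconcile these two cases carefully. Thus $\overline{\chi_D(N)}=1$, and \eqref{pre:twist} gives $G_{q^2}(N,D)=G_{q^2}(1,D)$.

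\textbf{Reduction to primes.} For $G_{q^2}(1,D)=q^m$, I would use \eqref{pre:gauss-sieve} to factor over the $\Fq$-prime divisors $P$ of $D$. This gives $G(1,D)=\prod G(D/P,P)$, and each factor equals $G(1,P)$ by the previous paragraph, since $D/P$ is rational and coprime to $P$. It therefore suffices to show $G_{q^2}(1,P)=q^{\deg P}$ for a monic irreducible $P\in\Fq[T]$.

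\textbf{The prime case.} The absolute value is $q^{\deg P}$ by Lemma~\ref{pre:gauss-identities}, since $\deg_{\Fqq}$ is $\deg P$ and $|\cdot|$ is measured in $q^2$-units. So it remains to show the value is a positive real. Reality comes from applying $\sigma$ to the summation variable $a\mapsto a^\sigma$, which permutes residues mod $P$ because $P^\sigma=P$. Under this map $e_{q^2}$ is invariant, since the trace to $\mathbb F_p$ is $\sigma$-invariant, while $\chi_P(a^\sigma)=\overline{\chi_P(a)}$ by the $q$-th power inversion of cube roots of unity. Hence $G=\overline{G}$.

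Positivity is the main obstacle. One route is to write $G(1,P)$ as a Gauss sum over $\Fqq[T]/P$, a finite field of size $q^{2d}$. This is a cubic Gauss sum for a character trivial on the subfield of size $q^d$ (for inert $P$), and by Stickelberger/Hasse--Davenport for a character that is a lift of order dividing $q^d+1$, the sum equals $\pm q^d$ with an explicit sign. One then checks that the sign, combined with the $e_{q^2}$ normalization and the even-character factor, is $+1$. I would cite \cite[Lemma~4.4]{DFL}'s computation for this sign rather than redo it. For split-type $P=\pi\pi^\sigma$, one uses $G(1,\pi\pi^\sigma)=\chi_\pi(\pi^\sigma)\chi_{\pi^\sigma}(\pi)\cdot G(1,\pi)G(1,\pi^\sigma)$. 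Here $G(1,\pi^\sigma)=\overline{\sigma\text{-conjugate}}$, and the product is $|G(1,\pi)|^2/q^{\deg P}$ times a reciprocity factor that equals $1$, giving $q^{\deg P}$.
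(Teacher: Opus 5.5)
Your derivation of the second identity is the paper's: $\chi_D(N)=1$ for coprime $N,D\in\Fq[T]$, then \eqref{pre:twist}. For the first identity the paper simply invokes \cite[Lemma~4.4]{DFL}. You instead reduce to primes $P\mid D$, obtain $|G_{q^2}(1,P)|=q^{\deg P}$ and reality directly, treat even-degree $P$ by hand, and cite DFL only for the sign at odd-degree $P$. This isolates the one non-formal input, but it does not remove the citation.

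You can remove it as follows. For $d=\deg P$ odd, $\chi_P$ is a cubic character of $\Fqq[T]/P\cong\mathbb F_{q^{2d}}$ with $3\mid q^d+1$, so Stickelberger's theorem on pure Gauss sums applies. Write $q=p^f$; here $f$ is odd because $q\equiv2\pmod3$, and $(q^d+1)/3$ is even. The theorem then gives the value $(-1)^{fd-1}q^d=q^d$ for the canonical additive character. Your character $a\mapsto e_{q^2}(a/P)$ is the canonical one twisted by some $\lambda\ne0$, which multiplies the sum by $\overline{\chi_P(\lambda)}$. This cube root of unity must be $1$, since you showed the sum is real.

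Several details need repair.
\begin{itemize}
\item The split/inert labels are reversed. A prime of odd degree $d$ stays irreducible over $\Fqq$; your computation $N^{(q^{2d}-1)/3}=(N^{q^d-1})^{(q^d+1)/3}$ is exactly this case. A prime of even degree splits as $\pi\pi^\sigma$ with $\deg\pi=d/2$, and that is where the conjugation argument applies. Your closing sentence assigns them the other way, and for even $d$ the inert computation is unavailable because $3\nmid q^d+1$.
\item Iterating \eqref{pre:gauss-sieve} gives $G(1,P_1)G(P_1,P_2)G(P_1P_2,P_3)\cdots$, not $\prod_PG(D/P,P)$. The two differ in general; both collapse to $\prod_PG(1,P)$ here only because every twist by a rational polynomial is trivial.
\item The reality argument also needs the substitution $a\mapsto-a$ together with $\chi_P(-1)=1$.
\item In the split case, $G(1,\pi^\sigma)=\overline{G(1,\pi)}$ gives $G(1,P)=\chi_\pi(\pi^\sigma)^2|G(1,\pi)|^2=\chi_\pi(\pi^\sigma)^2q^{\deg P}$, with no division by $q^{\deg P}$. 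The claim $\chi_\pi(\pi^\sigma)=1$ must also be justified. Reciprocity gives $\chi_\pi(\pi^\sigma)=\chi_{\pi^\sigma}(\pi)$, while $\chi_{\pi^\sigma}(b^\sigma)=\overline{\chi_\pi(b)}$ at $b=\pi^\sigma$ gives $\chi_{\pi^\sigma}(\pi)=\overline{\chi_\pi(\pi^\sigma)}$. So this cube root of unity is real, hence equal to $1$.
\end{itemize}
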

\begin{proof}
The first identity is \cite[Lemma~4.4]{DFL} in the same additive
normalization. Since $\chi_D(N)=1$ for coprime polynomials $N,D\in\Fq[T]$, the second follows from \eqref{pre:twist}.
\end{proof}

\subsection{The generating series of cubic Gauss sum}

Following \cite[Section 4]{HF}, set
\begin{equation}\label{pre:H}
 H(M,k)=\sum_{\substack{F\in\Aqq\\(F,M)=1}}
       \frac{G_{q^2}(M,F)}{|F|_2^k},
 \qquad \Re k>\frac32.
\end{equation}
Thus $H(M,k)=\widetilde\Psi_{q^2}(M,q^{-2k})$ in the notation of
\cite{DFL}.  The following facts are consequences of
\cite[Proposition 3.2, equation (26), Theorem 3.10 and Lemma 3.11]{DFL}.

\Needspace{8\baselineskip}
\begin{lemma}[Consequence of {\cite[Theorem 3.10 and Lemma 3.11]{DFL}}]
\label{pre:gauss-series}
Put $z=q^{-2k}$.  The function $H(M,k)$ is rational in $z$, and
\[
 (1-q^{8-6k})H(M,k)
\]
is holomorphic for $\Re k>2/3$.  More precisely its possible remaining
denominators have the form $1-(q^4z^3)^{\deg\pi}$, with
$\pi\mid M$.  For every compact real interval
$I\subset(2/3,3/2)$ and every $\eps>0$,
\begin{equation}\label{pre:gauss-series-bound}
 |(1-q^{8-6k})H(M,k)|
 \ll_{q,I,\eps}
 |M|_2^{\frac12(\frac32-\Re k)+\eps}
 \qquad (\Re k\in I),
\end{equation}
uniformly in $M$ and $\Im k$.
\end{lemma}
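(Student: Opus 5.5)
This lemma is a repackaging of \cite[Theorem 3.10 and Lemma 3.11]{DFL}, with the base field $\Fq$ replaced by $\Fqq$. The plan is therefore to translate their statement into the present notation rather than to reprove it. First, identify $H(M,k)$ with $\widetilde\Psi_{q^2}(M,z)$ at $z=q^{-2k}$. The sum runs over monic $F\in\Aqq$ with $(F,M)=1$, and $|F|_2^{-k}=z^{\deg F}$, so $H(M,k)=\sum_{n}z^n\sum_{\deg F=n,(F,M)=1}G_{q^2}(M,F)$ is a power series in $z$. By Lemma~\ref{pre:gauss-identities}, $|G_{q^2}(M,F)|\le q^{2\deg F}$ on squarefree $F$ and $G_{q^2}(M,F)=0$ on non-squarefree $F$ coprime to $M$. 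Hence the series converges absolutely for $|z|<q^{-3}$, that is, for $\Re k>3/2$, which matches the definition \eqref{pre:H}.

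Second, import the rationality and the location of the poles. \cite[Proposition 3.2 and (26)]{DFL} express $\widetilde\Psi$ through the Kubota--Patterson type recursion, summing over the cube-full part of $M$. \cite[Theorem 3.10]{DFL} then gives that $\widetilde\Psi_{q}(M,u)$ is rational, with a pole only at $u^3=q^{-4}$ (the base-field version) inside the relevant disc. The factor $1-(q^4z^3)^{\deg\pi}$ for primes $\pi\mid M$ comes from removing the Euler factors at $\pi$ that the coprimality condition introduces. With $q$ replaced by $q^2$, the critical point becomes $q^{2\cdot 4}z^3=1$. Equivalently, in terms of $k$ it becomes $q^{8-6k}=1$, and this is removed by multiplying by $(1-q^{8-6k})$. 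The main bookkeeping step is to check that the exponents match, namely $|z|=q^{-2\Re k}$ and $q^4|z|^3=q^{4-6\Re k}<1$ exactly when $\Re k>2/3$. This shows that the remaining factors $1-(q^4z^3)^{\deg\pi}$ do not vanish in the region $\Re k>2/3$, so holomorphy holds there.

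Third, derive the bound \eqref{pre:gauss-series-bound} from \cite[Lemma 3.11]{DFL}. That lemma bounds the residue-removed series on $|u|=q^{-\sigma}$ by $|M|^{\frac12(\frac32-\sigma)+\eps}$, uniformly on circles. Over $\Fqq$ the norm $|M|$ becomes $|M|_2$, and $\sigma$ corresponds to $\Re k$. Uniformity in $\Im k$ is automatic, because everything is a function of $z$ and $|z|$ depends only on $\Re k$. For a compact interval $I\subset(2/3,3/2)$, the implied constant can be taken uniform by compactness, since the bound in DFL is stated for $\sigma$ in a closed subinterval. The factors $1-(q^4z^3)^{\deg\pi}$ are bounded below by $1-q^{(4-6\max I')\deg\pi}\ge c_I>0$, where $I'$ is a slightly enlarged interval, so dividing by them costs only a constant. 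The remaining product over $\pi\mid M$ contributes at most $\ll|M|^\eps$.

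The main obstacle is normalization. I need to verify that the DFL Gauss sum over $\Fqq$ (their $G_{q^2}$ with the additive character $e_{q^2}$) and the coprimality convention $(F,M)=1$ agree with the present definitions. In particular, the extra condition $(F,M)=1$ in \eqref{pre:H} versus DFL's $\widetilde\Psi$ must be reconciled; I expect it is exactly what the tilde denotes. I also need to check that the exponent $\frac12(\frac32-\Re k)$ is stated in terms of $|M|_2$ rather than $|M|$. I would check this on the case $M=1$, comparing with the known main pole at $k=4/3$, before assuming the transfer is literal.
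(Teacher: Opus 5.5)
Your first two steps agree with the paper's proof: the identification with $\widetilde\Psi_{q^2}(M,z)$, the denominator $1-q^8z^3$ of the unrestricted series, the local coprimality denominators $1-(q^4z^3)^{\deg\pi}$ with zeros on $|z|=q^{-4/3}$ (hence holomorphy for $\Re k>2/3$), and uniformity in $\Im k$ from rationality in $z$.

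The gap is in the bound. You attribute to \cite[Lemma~3.11]{DFL} a bound for the pole-cleared series that holds uniformly on entire circles $|z|=q^{-2\sigma}$. As the paper uses it, that estimate concerns $\widetilde\Psi$ itself. It holds only away from the three zeros $z_j=q^{-8/3}\omega^j$ of $1-q^8z^3$, where $\widetilde\Psi$ genuinely has poles and cannot be bounded. These points lie on $\Re k=4/3$, inside $(2/3,3/2)$. So for any $I$ containing $4/3$, your argument gives no bound near the $z_j$. The clearing factor $1-q^{8-6k}$ appears in your plan, but it is never used to produce the estimate there.

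The missing step is short. The product $(1-q^8z^3)H(M,k)$ is holomorphic near each $z_j$. On a small circle around $z_j$, with radius depending only on $q$ and $\eps$, the cited bound applies and $|1-q^8z^3|$ is bounded. The maximum modulus principle then carries the bound into the disk. Choose the radius so small that $\Re k$ varies by less than $\eps$ across the disk; the change in the exponent $\frac12(\frac32-\Re k)$ is then absorbed into $\eps$. This is exactly what the paper does. Alternatively, apply Hadamard's three circles theorem to the pole-cleared function on an annulus whose boundary circles avoid $|z|=q^{-8/3}$. The exponent is affine in $\log|z|$, so the interpolation loses nothing.

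Two smaller slips. First, for squarefree $F$ coprime to $M$ one has $|G_{q^2}(M,F)|=q^{\deg F}=|F|_2^{1/2}$. Only this, not $q^{2\deg F}$, gives absolute convergence for $\Re k>3/2$. Second, there is no need to divide by the local factors $1-(q^4z^3)^{\deg\pi}$. If you do, the correct lower bound uses $\min I$, not $\max I'$. Also, the product of these bounds over $\pi\mid M$ is $|M|_2^{o(1)}$, not a constant.
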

\begin{proof}
The unrestricted series in \cite[equation (26)]{DFL} has denominator
$1-q^8z^3$ after replacing the base field by $\Fqq$.  The
coprimality corrections in \cite[Lemma 3.11]{DFL} have the displayed
local denominators; their zeros satisfy $|z|=q^{-4/3}$.
The same lemma gives \eqref{pre:gauss-series-bound} away from the three
zeros of $1-q^8z^3$.  After multiplying by that factor, the maximum
modulus principle on small disks around these three points gives
the bound there as well.  The disks may be chosen small enough that
the variation in the conductor exponent is absorbed into $\eps$.
The rationality in $z$ gives the asserted uniformity in $\Im k$.
\end{proof}

We use the following specialization of the polar calculation in
\cite{DFL}.

\begin{lemma}[Specialization of {\cite[Proposition 3.1 and Lemmas 3.9 and 4.4]{DFL}}]
\label{pre:rational-residue}
Let $M\in\Aq$ and write uniquely
$M=M_1M_2^2M_3^3$, where $M_1,M_2$ are squarefree and coprime.
Set $\omega=e^{2\pi i/3}$ and $z_j=q^{-8/3}\omega^j$.
Then, for $j=0,1,2$,
\begin{align}
 &\left.(1-q^{8-6k})H(M,k)\right|_{q^{-2k}=z_j}
 \label{pre:polar-coefficient}\\
 &\quad=
 \frac{\mathbf1_{M_2=1}}{\zeta_{q^2}(2)}
 (1+q^{1/3}\omega^j)
 q^{-\deg M_1/3}\omega^{-j\deg M_1}
 \prod_{\substack{\pi\mid M\\\pi\text{ prime over }\Fqq}}
 (1+|\pi|_2^{-1})^{-1},\notag
\end{align}
where $\zeta_{q^2}(2)=(1-q^{-2})^{-1}$.
Equivalently, the last product equals $\prod_{P\mid M}\beta_P$,
over primes $P\in\Fq[T]$, where for $d=\deg P$,
\begin{equation}\label{pre:beta}
 \beta_P=
 \begin{cases}
 (1+q^{-2d})^{-1},&d\text{ odd},\\
 (1+q^{-d})^{-2},&d\text{ even}.
 \end{cases}
\end{equation}
\end{lemma}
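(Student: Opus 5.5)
The plan is to compute the residue of $H(M,k)$ at $q^{-2k}=z_j$ directly from the structure of the generating series in \cite{DFL}, rather than through the analytic bounds of Lemma~\ref{pre:gauss-series}.

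\textbf{Step 1: reduce to the unrestricted series.} By \cite[Proposition 3.1 and Lemma 3.9]{DFL}, transported to the base field $\Fqq$, the series $H(M,k)=\widetilde\Psi_{q^2}(M,z)$ factors as follows. The pole of the unrestricted series $\Psi_{q^2}(M',z)$ at $z^3=q^{-8}$ is attached to the $M$-free part. It is multiplied by a finite local correction over the primes $\pi\mid M$, coming from the coprimality condition $(F,M)=1$. I would first use Lemma~\ref{pre:gauss-identities} to rewrite $G_{q^2}(M,F)$ for $(F,M)=1$ as $\overline{\chi_F(M)}G_{q^2}(1,F)$. Cubic reciprocity then turns this into $\overline{\chi_M(F)}$, so $H(M,k)$ becomes a twisted Gauss-sum series with character $\overline{\chi_M}$. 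Since $\chi_M$ depends only on $M_1M_2^2$ modulo cubes, the $M_3^3$ part contributes only the coprimality condition at the primes dividing $M_3$.

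\textbf{Step 2: residue of the twisted series.} The residue of $(1-q^8z^3)\Psi_{q^2}(M,z)$ at $z=z_j$ is given in \cite[Proposition 3.1]{DFL}. It vanishes unless the cube-free part of the twist is squarefree, which is where $\mathbf 1_{M_2=1}$ comes from. Only squarefree twists survive, as in Patterson's theory. When $M_2=1$ the residue is a constant multiple of $|M_1|_2^{-1/6}\,\overline{\text{phase}}$. Evaluated at $z_j=q^{-8/3}\omega^j$, this gives $q^{-\deg M_1/3}\omega^{-j\deg M_1}$: the factor $z_j^{\deg M_1}$ combined with the normalization produces exactly this power of $q$ and this root of unity. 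Here I would invoke \cite[Lemma 4.4]{DFL} (Lemma~\ref{pre:rational-phase}): because $M\in\Fq[T]$, the Gauss-sum phases $G_{q^2}(1,\cdot)$ attached to its prime factors are real and equal to $q^{\deg}$. This removes any nontrivial cubic Gauss-sum phase from the residue and leaves only the $\omega^j$ powers.

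\textbf{Step 3: constant and local factors.} The $j$-dependent constant $(1+q^{1/3}\omega^j)/\zeta_{q^2}(2)$ is the residue at $M=1$, read off from \cite[(26)]{DFL} with the base field changed to $\Fqq$. Each prime $\pi\mid M$ of $\Fqq[T]$ contributes a coprimality Euler factor, which at the pole reduces to $(1+|\pi|_2^{-1})^{-1}$. To pass to primes $P\in\Fq[T]$, I would use that $q\equiv2\pmod 3$ has no bearing on the splitting here: a prime $P$ of odd degree $d$ stays inert in $\Fqq[T]$, giving a single $\pi$ with $|\pi|_2=q^{2d}$. A prime $P$ of even degree splits into $\pi\pi^\sigma$ with $|\pi|_2=q^d$. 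This yields \eqref{pre:beta}.

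The main obstacle is Step 2. One must track the normalizations carefully (the DFL conventions for $\widetilde\Psi$ versus $H$, the variable $z=q^{-2k}$, and the conjugation from reciprocity) to get the exact exponent $-\deg M_1/3$ and the sign of the $\omega$-power. I would check the final formula against the case $M=P$, a single prime, directly from Lemma~3.9 of \cite{DFL}.
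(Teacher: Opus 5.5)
Your outline uses the same inputs as the paper: DFL's Proposition~3.1 and Lemma~3.9 over $\Fqq$, Lemma~\ref{pre:rational-phase}, and the inert/split behaviour giving \eqref{pre:beta}, which you handle correctly. There is a gap, however. The step that is the actual content of the lemma, the factor $\omega^{-j\deg M_1}(1+q^{1/3}\omega^j)$, is asserted rather than derived, and the mechanism you indicate is wrong. As written, $z_j^{\deg M_1}=q^{-8\deg M_1/3}\omega^{j\deg M_1}$ has the opposite root of unity and a spurious power of $q$. You also already obtain $q^{-\deg M_1/3}$ from $|M_1|_2^{-1/6}$ (that is, from $G_{q^2}(1,M_1)|M_1|_2^{-2/3}$), so attributing it again to $z_j^{\deg M_1}$ counts it twice. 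Likewise, peeling off ``the residue at $M=1$'' as a $j$-dependent constant independent of $M$ needs exactly the computation you have skipped.

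The missing idea is that DFL's main term for the degree-$d$ coefficient depends on $d$ only through $q^{-8[d+a]_3/3}\rho(1,[d+a]_3)$, beyond the growth $q^{8d/3}$, where $a=\deg M_1$. Over $\Fqq$ one has $\rho(1,0)=1$, $\rho(1,1)=q^3$ and $\rho(1,2)=0$. The value $\rho(1,1)=q^3$ uses $\tau(\chi_3)=q$ from \cite[Lemma~4.4]{DFL}; without it your constant $1+q^{1/3}\omega^j$ would carry an undetermined phase. Writing $d=3t+i$, the sum over $t$ produces $(1-q^8z^3)^{-1}$. At $z=z_j$ the remaining factor is $\sum_{i=0}^2\omega^{ji}q^{-8[i+a]_3/3}\rho(1,[i+a]_3)$, which equals $\omega^{-ja}(1+q^{1/3}\omega^j)$ after reindexing by $b=[i+a]_3$. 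So the shift by $a$ in the residue class is what produces $\omega^{-ja}$, with no accompanying power of $q$.

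You also need to explain why DFL's error term does not affect the value at $z_j$. It is $O_{q,M}(q^{2d})$, so its generating function is holomorphic on $|z|<q^{-2}$, a disc that contains $|z_j|=q^{-8/3}$. Your Step~1 rewriting via reciprocity as a series twisted by $\overline{\chi_M}$ is correct but not needed.
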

\begin{proof}
Apply \cite[Proposition~3.1 and Lemma~3.9]{DFL} over $\Fqq$.
Put $a=\deg M_1$, and let $[b]_3\in\{0,1,2\}$ denote the
residue of $b$ modulo $3$. In the present normalization,
\[
 \rho(1,0)=1,\qquad \rho(1,1)=q^3,\qquad \rho(1,2)=0.
\]
Here $\tau(\chi_3)=q$ by \cite[Lemma~4.4]{DFL}.
For fixed $M$, the error in the cited formula for the coefficient of degree $d$
is $O_{q,M}(q^{2d})$, so its generating function is holomorphic
near all $z_j$. Summing the main term over the three degree classes
gives denominator $1-q^8z^3$ and, at $z=z_j$, the factor
\[
 \sum_{i=0}^2\omega^{ji}q^{-8[i+a]_3/3}\rho(1,[i+a]_3)
 =\omega^{-ja}(1+q^{1/3}\omega^j).
\]
Finally Lemma~\ref{pre:rational-phase} gives
$G_{q^2}(1,M_1)|M_1|_2^{-2/3}=q^{-a/3}$.
A prime in $\Fq[T]$ of odd degree remains prime over $\Fqq$, whereas
one of even degree splits into two primes of half its degree.
This proves \eqref{pre:beta} and the lemma.
\end{proof}

\subsection{The tube theorem}

For an open subset $U\subset\mathbb R^2$, write
$T(U)=\{(s,w)\in\mathbb C^2:(\Re s,\Re w)\in U\}$.
We use the following form of Bochner's tube theorem \cite{Bochner}.

\begin{theorem}[Bochner]\label{pre:bochner}
If $U$ is open and connected, every holomorphic function on $T(U)$
extends uniquely to a holomorphic function on
$T(\operatorname{conv}U)$.
\end{theorem}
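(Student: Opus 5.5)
Bochner's tube theorem (Theorem~\ref{pre:bochner}) is a classical result, and the paper cites \cite{Bochner} rather than proving it. The plan below is the standard route, which I would follow if a self-contained argument were required. There are two steps: reduce to a local statement, and prove the local statement with a Cauchy-integral or Hartogs-type argument in the imaginary directions.

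\emph{Local statement.} Let $f$ be holomorphic on $T(U)$, and let $x,y\in U$ be joined by a segment lying in $U$. First I would show that $f$ extends holomorphically to $T(V)$ for some open neighbourhood $V$ of the convex hull of a small triangle spanned by points of $U$ near that segment. Iterating, the union of all such extensions covers $T(\operatorname{conv}U)$, because $\operatorname{conv}U$ is the increasing union of iterated hulls of triangles.

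\emph{Proof of the local statement.} For simplicity I work in $\mathbb C^2$, which is the case used here. Take an affine real-linear change of variables, which preserves tube structure. This reduces the problem to $U$ containing a neighbourhood of the union of the segments $[0,1]\times\{0\}$ and $\{0\}\times[0,1]$, with the goal of extending to a neighbourhood of the tube over the triangle with vertices $(0,0)$, $(1,0)$, $(0,1)$.

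For each real $t\in[0,1]$, consider the complex curves $\{z_1+z_2=c\}$ restricted to real parts in small regions. I would then use a continuity-principle (Kontinuitätssatz) argument. The family of analytic discs $\{(\lambda\,\zeta,(1-\lambda)\zeta')\}$ has boundaries lying in $T(U)$, because the imaginary parts are unrestricted. Translation invariance in the imaginary directions lets one take discs whose boundary circles have real parts on the two legs while their centres sweep across the triangle. Alternatively, one can write $f$ via the Cauchy integral over such discs and show that the result is independent of the choices.

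\emph{Main obstacle.} The hardest part is proving single-valuedness, that is, uniqueness of the extension. I would handle it as follows. Fixing the imaginary parts and using the real-analyticity in real parts, any two extensions agree on an open subset. The identity theorem then applies, since $T(\operatorname{conv}U)$ is connected. Uniqueness is therefore automatic from connectedness of $U$, which implies connectedness of $T(U)$.

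In the paper itself I would simply invoke \cite{Bochner}.
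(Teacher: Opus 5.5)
Citing Bochner's theorem, as you ultimately propose, is exactly what the paper does, so as a plan for the paper there is nothing to add. The sketch you offer as a self-contained argument, however, has two genuine gaps.

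\emph{The discs.} Membership in $T(U)$ depends only on real parts, so ``unrestricted imaginary parts'' cannot put a boundary circle into $T(U)$. The whole difficulty is to keep the boundary real parts near $L=[0,1]\times\{0\}\cup\{0\}\times[0,1]$ while the centre lies over the interior of the triangle. Your family $(\lambda\zeta,(1-\lambda)\zeta')$ does not achieve this. When $U$ is only an $\varepsilon$-neighbourhood of $L$, no complex-affine disc can. The real parts of $\zeta\mapsto a+b\zeta$, $|\zeta|\le1$, form a filled, possibly degenerate, ellipse $E$, and each boundary point of $E$ is the real part of a point of the circle. If $p\in E$ has both coordinates larger than $\varepsilon$, the ray from $p$ in direction $(1,1)$ leaves $E$ at such a boundary point, whose coordinates are both larger than $\varepsilon$. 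That point is therefore at distance greater than $\varepsilon$ from $L$.

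The missing idea is to use discs with non-affine harmonic real parts. Choose a smooth loop $\gamma=(\gamma_1,\gamma_2)$ from the circle into $L$, running out and back along one leg and then along the other. Let $\alpha_j$ be holomorphic on the disc with $\operatorname{Re}\alpha_j$ equal to the Poisson integral of $\gamma_j$. The boundary of $\zeta\mapsto(\alpha_1(\zeta),\alpha_2(\zeta))+iy$ then lies over $L$, and the whole disc lies over the closed triangle. Its centre lies over the mean of $\gamma$, which can be made any interior point of the triangle. Replacing $\gamma$ by $s\gamma$ with $s\to0$ gives a continuous family starting inside $T(U)$, to which the continuity principle applies. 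Translation invariance in $i\mathbb R^2$ is used only to move these discs around.

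\emph{Single-valuedness.} You rightly flag this, but your argument proves only uniqueness. Uniqueness is trivial here, since $T(\operatorname{conv}U)$ is convex and contains the nonempty open set $T(U)$. The continuity principle produces continuation along paths. Gluing the local extensions from your iteration requires any two of them to agree wherever both are defined. A priori they agree only on those components of the overlap that meet the region where both coincide with $f$, and for nonconvex $U$ such overlaps can be disconnected. For example, let $U$ be a neighbourhood of $L$ together with a thin horseshoe that leaves from $(1,0)$, runs outside the triangle and re-enters it. Then $V\cap U$ is disconnected for a neighbourhood $V$ of the triangle. The identity theorem on $T(\operatorname{conv}U)$ cannot be invoked before a function there has been constructed.

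A symptom is that your sketch uses connectedness of $U$ only for uniqueness, whereas it is needed for \emph{existence}: the function equal to $0$ and $1$ on the tubes over two disjoint discs does not extend. An extra step is required. For instance, show that germs of $f$ continue along every path in $T(\operatorname{conv}U)$ starting in $T(U)$, then apply the monodromy theorem on this simply connected domain. Connectedness of $T(U)$ then ensures the result agrees with $f$ on all of $T(U)$.

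For the periodic case actually used in Lemma~\ref{pre:tube-generating}, both difficulties disappear. A holomorphic function on the connected Reinhardt domain $\mathcal D(U)$ has a Laurent expansion $\sum c_{ab}u^av^b$ whose coefficients are independent of the torus of integration. Since $\sum|c_{ab}||u|^a|v|^b$ is a convex function of $(\log|u|,\log|v|)$, this series converges locally normally on $\mathcal D(\operatorname{conv}U)$.
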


The generating variables in a Dirichlet series over a function field are
$u=q^{-2s}$ and $v=q^{-w}$.  Their domains are described by the
moduli $|u|$ and $|v|$, whereas Theorem~\ref{pre:bochner} concerns
the real parts of $(s,w)$.  The following consequence makes the
passage between the two descriptions precise.

\begin{lemma}[Tube theorem in the generating variables]
\label{pre:tube-generating}
Let $q>1$ and let $U\subset\mathbb R^2$ be open and connected. Set
\begin{equation}\label{pre:log-domain}
 \mathcal D(U)=\left\{(u,v)\in(\mathbb C^\times)^2:
 \left(-\frac{\log|u|}{2\log q},
       -\frac{\log|v|}{\log q}\right)\in U\right\}.
\end{equation}
Every holomorphic function $F$ on $\mathcal D(U)$ has a unique
holomorphic continuation to
$\mathcal D(\operatorname{conv}U)$.
Equivalently, the holomorphic continuation of
$f(s,w)=F(q^{-2s},q^{-w})$ to $T(\operatorname{conv}U)$ preserves
the periods
\begin{equation}\label{pre:periods}
 f\left(s+\frac{\pi i}{\log q},w\right)
 =f(s,w)
 =f\left(s,w+\frac{2\pi i}{\log q}\right).
\end{equation}
If $F$ is meromorphic and $P(u,v)F(u,v)$ is holomorphic on
$\mathcal D(U)$, where $P$ is a nonzero Laurent polynomial, then
$F$ continues meromorphically to $\mathcal D(\operatorname{conv}U)$,
with possible poles only on $P(u,v)=0$.
\end{lemma}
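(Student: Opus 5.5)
The plan is to reduce the statement to Theorem~\ref{pre:bochner} by a logarithmic change of variables, and to get uniqueness from the identity theorem. Consider the holomorphic covering map
\[
 \Phi:\mathbb C^2\to(\mathbb C^\times)^2,\qquad \Phi(s,w)=(q^{-2s},q^{-w}).
\]
Since $|q^{-2s}|=q^{-2\Re s}$ and $|q^{-w}|=q^{-\Re w}$, we have
\[
 \Phi^{-1}(\mathcal D(U))=T(U),
\]
and this holds for every $U$, in particular for $\operatorname{conv}U$. Given $F$ holomorphic on $\mathcal D(U)$, put $f=F\circ\Phi$. Then $f$ is holomorphic on $T(U)$ and satisfies the periodicity \eqref{pre:periods} there. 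Because $U$ is open and connected, Theorem~\ref{pre:bochner} gives a unique holomorphic extension $\tilde f$ of $f$ to $T(\operatorname{conv}U)$.

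Next, the periods persist on the larger tube. The translated functions $(s,w)\mapsto\tilde f(s+\pi i/\log q,w)$ and $(s,w)\mapsto\tilde f(s,w+2\pi i/\log q)$ are holomorphic on $T(\operatorname{conv}U)$, because translations by imaginary vectors preserve tubes. They agree with $\tilde f$ on the nonempty open set $T(U)$. The domain $T(\operatorname{conv}U)$ is connected, being a product of a convex set with $\mathbb R^2$. Hence the identity theorem gives equality everywhere, which is \eqref{pre:periods}.

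The extended function then descends to the generating variables. The deck group of $\Phi$ is generated by exactly these two translations, so $\tilde f$ factors as $\tilde F\circ\Phi$ for a well-defined function $\tilde F$ on $\mathcal D(\operatorname{conv}U)=\Phi(T(\operatorname{conv}U))$. Since $\Phi$ is locally biholomorphic, with local inverses given by branches of the logarithm, $\tilde F$ is holomorphic, and it extends $F$.

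For uniqueness, I would first check that $\mathcal D(\operatorname{conv}U)$ is connected. It is the continuous image of the connected set $T(\operatorname{conv}U)$. Any two holomorphic continuations of $F$ agree on the open set $\mathcal D(U)$, so they agree on all of $\mathcal D(\operatorname{conv}U)$. The equivalence with the statement about $f$ is then just the correspondence $f\leftrightarrow F$ established above.

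The meromorphic statement is where some care is needed. Apply the holomorphic case to $G=PF$, which extends to a holomorphic $\tilde G$ on $\mathcal D(\operatorname{conv}U)$. Since $P$ is a Laurent polynomial, it is holomorphic on all of $(\mathbb C^\times)^2$. Moreover $P$ is not identically zero on the connected domain $\mathcal D(\operatorname{conv}U)$: a nonzero Laurent polynomial cannot vanish on a nonempty open set. Therefore $\tilde F=\tilde G/P$ is meromorphic on $\mathcal D(\operatorname{conv}U)$, holomorphic off $\{P=0\}$, and equal to $F$ on $\mathcal D(U)$ wherever $F$ is defined.

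I expect the main obstacle to be the descent step, in particular verifying that the periods persist beyond $T(U)$. This rests on the connectedness of $T(\operatorname{conv}U)$ together with the identity theorem, and both are routine. Everything else is formal.
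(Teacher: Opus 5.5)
Your proposal is correct and follows essentially the same route as the paper. You pull back along $(s,w)\mapsto(q^{-2s},q^{-w})$, apply Bochner, propagate the periods by the identity theorem on the connected tube $T(\operatorname{conv}U)$, and descend via local branches of the logarithm. The meromorphic case is handled by dividing the continuation of $PF$ by $P$. The only minor difference is that you deduce uniqueness from the connectedness of $\mathcal D(\operatorname{conv}U)$ instead of pulling back to the larger tube, which is equally valid.
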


\begin{proof}
The map $(s,w)\mapsto(q^{-2s},q^{-w})$ takes $T(U)$ onto
$\mathcal D(U)$ and is locally biholomorphic. Its fibres differ
by integer multiples of $(\pi i/\log q,0)$ and
$(0,2\pi i/\log q)$. The pullback $f$ is holomorphic on $T(U)$
and satisfies \eqref{pre:periods}. By Theorem~\ref{pre:bochner},
it extends uniquely to a holomorphic function $\widetilde f$ on
$T(\operatorname{conv}U)$.

Both $T(U)$ and $T(\operatorname{conv}U)$ are invariant under
these imaginary translations. Thus
\[
 \widetilde f\left(s+\frac{\pi i}{\log q},w\right)
       -\widetilde f(s,w)
\]
is holomorphic on the connected larger tube and vanishes on $T(U)$.
The identity theorem makes it identically zero. The same argument
applies to the period in $w$. Consequently $\widetilde f$ is
constant on every fibre of the exponential map and defines a
function on $\mathcal D(\operatorname{conv}U)$.
Local branches of the logarithm show that this function is
holomorphic. Uniqueness follows by pulling back to the larger tube.
Finally, apply the holomorphic assertion to $PF$ and divide its
continuation by $P$ to obtain the meromorphic assertion.
\end{proof}

\begin{remark}\label{pre:tube-application}
For a union of initial regions, the local formulas must first agree
on their overlaps and define one pole-free holomorphic function
on a connected domain. Lemma~\ref{pre:tube-generating} then takes
the convex hull in the logarithmic coordinates
\eqref{pre:log-domain}. It does not take the ordinary convex hull
of the domains in $(u,v)$.

The domains in the lemma exclude the coordinate axes. In the
central slice used below, the original power series is holomorphic
at $u=0$ for $v=q^{-1/2}$. Agreement near $u=0$ therefore fills in
that point after continuation. Moreover, periodicity bounds each
fixed continued holomorphic function on every tube with compact
real base inside $\operatorname{conv}U$: it suffices to restrict
the imaginary parts to a compact period rectangle.
\end{remark}

\section{The generating series and the proof of Theorem~\ref{thm:main}}

\subsection{The generating series and its initial regions}
Following Hong and Fang \cite{HF}, define
\begin{align}
A_3(s,w)&=\sum_{\substack{F\in\Hqq\\ F\text{ has no nonconstant divisor in }\Fq[T]}}
\frac{L_q(w,\chi_F)}{|F|_2^s},\label{eq:A3dir}\\
A_3(u,v)&=\sum_{\substack{F\in\Hqq\\ F\text{ has no nonconstant divisor in }\Fq[T]}}
L_q(v,\chi_F)u^{\deg F}.\label{eq:A3gen}
\end{align}
Here the notation $L_q(v,\chi)=\sum_{N\in\Aq}\chi(N)v^{\deg N}$ denotes the same $L$-function in the variable $v$. Thus \eqref{eq:variables} is understood throughout. We include $F=1$, with $L_q(w,1)=(1-q^{1-w})^{-1}$ and $L_q(v,1)=(1-qv)^{-1}$. This affects only the constant coefficient in $u$.

The family parametrization and Cauchy's formula give, for $0<r<q^{-2}$,
\begin{equation}\label{eq:perron}
\sumfamily L_q\!\left(\frac12,\chi\right)
=\frac1{2\pi i}\oint_{|u|=r}A_3(u,q^{-1/2})\frac{du}{u^{g/2+2}}.
\end{equation}
There is no factor $(1-u)^{-1}$, since the genus is fixed.

We use real bases $R_j$ and write $S_j=T(R_j)$ for their tubes in the Dirichlet variables. The two initial regions corresponding to the regions in the generating variables in \cite{HF} are
\begin{align}
R_1&=\{(\sigma,\tau):\sigma>1/2,\ \tau>1\},\label{eq:R1}\\
R_2&=\{(\sigma,\tau):\sigma>1,\ \sigma+\tau>3/2\}.\label{eq:R2}
\end{align}
Their argument gives holomorphy of $(1-q^{2-2s})A_3(s,w)$ on $S_1$; see \cite[Section~3]{HF}. On $S_2$ the series \eqref{eq:A3dir} converges normally apart from the term with conductor $1$ at $q^{w-1}=1$. For $\Re w\ge1/2$ this follows from the Lindel\"of bound, and for $\Re w<1/2$ from the functional equation followed by the same bound. Consequently
\begin{equation}\label{eq:commonproduct}
(1-q^{2-2s})(1-q^{w-1})(1-q^{8-6(s+w)})A_3(s,w)
\end{equation}
is holomorphic on both initial tubes. We will continue this common product.

\subsection{The dual representation and an exact first difference}
Set $k=s+w$ and $t=q^{w-1}$. The functional equation, M\"obius inversion over $\Fq[T]$ and multiplicativity of Gauss sums give the dual representation of Hong and Fang \cite{HF}:
\begin{equation}\label{eq:dual}
A_3(s,w)=\frac{q^{2w-1}(1-q^{-w})}{1-q^{w-1}}
\sum_{\substack{N,D\in\Aq\\(N,D)=1}}
|N|^{w-1}\mu(D)G_{q^2}(N,D)|D|_2^{-k}H(ND,k).
\end{equation}
Here $\mu$ is over $\Fq[T]$, and the opened sums converge absolutely
for $\Re k>3/2$ and $\Re w<0$. The derivation uses
\eqref{pre:gauss-sieve}; for $F=1$, the functional equation is
interpreted as the rational identity giving $(1-q^{1-w})^{-1}$.

For $n,m\ge0$, set
\begin{equation}\label{eq:Bnm}
B_{nm}(k)=\sum_{\substack{N,D\in\Aq\\\deg N=n,\ \deg D=m\\(N,D)=1}}
\mu(D)G_{q^2}(N,D)(1-q^{8-6k})H(ND,k),
\end{equation}
and put $\Delta B_{nm}=B_{nm}-qB_{n-1,m}$, with $B_{-1,m}=0$. Since
\[
q^{2w-1}\frac{1-q^{-w}}{1-q^{w-1}}=-\frac{t(1-qt)}{1-t},
\]
we have the exact identity
\begin{equation}\label{eq:diff}
(1-q^{8-6k})A_3(s,w)
=-\frac{t}{1-t}\sum_{n,m\ge0}\Delta B_{nm}(k)t^nq^{-2km}.
\end{equation}
The subtraction $qB_{n-1,m}$ is thus supplied by the original functional equation.

\subsection{Averaging over \texorpdfstring{$D$}{D}}
For $\Re k>3/2$ we open $H(ND,k)$. Its nonsquarefree terms vanish because its summation moduli are coprime to $ND$. Every remaining modulus has a unique decomposition
\[
F=EG,\qquad E\in\Hq,\quad G\in\Hqq,
\]
where $E$ is its maximal divisor in $\Fq[T]$ and $G$ has no nonconstant divisor in $\Fq[T]$. The polynomials $E$ and $GG^\sigma$ are coprime. Put $\ell=\deg G$ and $\chi=\overline{\chi_G}|_{\Fq[T]}$, which is primitive and nonprincipal of conductor $GG^\sigma$ when $G\ne1$. When $G=1$, set $\chi=1$.

To sum over $D$, define
\begin{equation}\label{eq:SF}
\mathcal S_F(x,y)=(1-qx)
\sum_{\substack{D\in\Hq\\(D,F)=1}}
\mu(D)\chi(D)y^{\deg D}
\sum_{\substack{N\in\Aq\\(N,D)=1}}
\overline{\chi_F(N)}x^{\deg N}.
\end{equation}
Write
\begin{equation}\label{eq:RF}
R_F(x)=(1-qx)L_q(x,\chi)
\prod_{P\mid E}(1-\chi(P)x^{\deg P}).
\end{equation}
For $G=1$, the convention $L_q(x,1)=(1-qx)^{-1}$ gives
$R_F(x)=\prod_{P\mid E}(1-x^{\deg P})$ by exact cancellation.
For a prime $P\in\Fq[T]$ of degree $d$, choosing $P\mid D$ contributes $-\chi(P)y^d(1-\chi(P)x^d)$. Therefore
\begin{align}
\mathcal S_F(x,y)
&=R_F(x)\prod_{P\nmid E}(1-\chi(P)y^d+\chi(P)^2(xy)^d)\label{eq:SFproduct}\\
&=\frac{R_F(x)}{L_q(y,\chi)}
\prod_{P\mid E}(1-\chi(P)y^d)^{-1}
\prod_{P\nmid E}\left(1+\frac{\chi(P)^2(xy)^d}{1-\chi(P)y^d}\right).
\label{eq:SFfactor}
\end{align}
Primes dividing $GG^\sigma$ are automatically omitted through the zero values of $\chi$. Thus \eqref{eq:SFproduct} retains all coprimality conditions. Its last product converges absolutely and locally uniformly when
\begin{equation}\label{eq:correctionregion}
|y|<1,\qquad q|xy|<1.
\end{equation}
The bound on every smaller closed polydisk is independent of $E$ and $G$, since the absolute local corrections are at most $|xy|^d/(1-|y|)$.

Lemma~\ref{pre:rational-phase} and the twisting identity \eqref{pre:twist} yield
\begin{equation}\label{eq:opened}
\begin{split}
\Delta B_{nm}(k)
={}&(1-q^{8-6k})q^m
\sum_{F\in\Hqq}G_{q^2}(1,F)q^{-2k\deg F}\\
&\quad\times\frac1{(2\pi i)^2}
\oint_{|x|=r_x}\oint_{|y|=r_y}
\mathcal S_F(x,y)\frac{dy\,dx}{x^{n+1}y^{m+1}}.
\end{split}
\end{equation}
Initially the circles are small and $\Re k>3/2$. The two integrals select the finite degrees before estimating the remaining sum over $F$.

We first bound the reciprocal $L$-function on a circle strictly
inside its zero-free disk.

\begin{lemma}\label{lem:inverseL}
For $0<\delta<1/2$ and every $\eps>0$, uniformly for the above nonprincipal $\chi$ and all $|y|=q^{-1/2-\delta}$,
\begin{equation}\label{eq:inverseL}
|L_q(y,\chi)|^{-1}\ll_{q,\delta,\eps}q^{\eps\ell}.
\end{equation}
\end{lemma}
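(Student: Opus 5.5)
The plan is to prove the lower bound $|L_q(y,\chi)|\ge q^{-\eps\ell}$ by splitting the logarithmic derivative expansion of $L_q(y,\chi)^{-1}$ at a cutoff of size $\asymp\log\ell$: short prime sums are handled trivially and the long tail by the Riemann hypothesis. Nothing beyond the Weil bound and the prime polynomial theorem is needed.

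\emph{Factorization.} For nonprincipal primitive $\chi$ of conductor $GG^\sigma$, which has degree $2\ell$, the function $L_q(y,\chi)$ is a polynomial in $y$ of degree at most $2\ell-1$. Since $\chi$ is even, $L_q(y,\chi)$ has the trivial factor $1-y$, and by Weil's theorem all remaining inverse roots have modulus $q^{1/2}$. Thus
\[
 L_q(y,\chi)=(1-y)^{e}\prod_{i=1}^{r}(1-\alpha_iy),\qquad |\alpha_i|=q^{1/2},\quad e\in\{0,1\},\quad r\le 2\ell .
\]
On $|y|=q^{-1/2-\delta}<1$ we have $|\alpha_iy|=q^{-\delta}<1$ and $|y|<1$, so every factor is nonzero and $\log L_q(y,\chi)$ has the convergent expansion
\[
 -\log L_q(y,\chi)=\sum_{n\ge1}\frac{y^n}{n}\Bigl(e+\sum_{i}\alpha_i^n\Bigr).
\]

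\emph{Two expressions for the coefficients.} Comparing with the Euler product gives
\[
 -\Bigl(e+\sum_i\alpha_i^n\Bigr)=\sum_{\deg f=n}\Lambda(f)\chi(f).
\]
This is an identity of power series coefficients, so it holds for each $n$ even though the Euler product does not converge on our circle. Trivially $\bigl|\sum_{\deg f=n}\Lambda(f)\chi(f)\bigr|\le q^n$ by the prime polynomial theorem. From the factorization, $\bigl|e+\sum_i\alpha_i^n\bigr|\le 1+2\ell q^{n/2}$.

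\emph{Splitting at a cutoff.} Choose $N$ with $q^{N/2}\asymp \ell$. For $n\le N$, use the trivial bound: the contribution is at most
\[
 \sum_{n\le N}\frac{(q|y|)^n}{n}\ll_{\delta} q^{N(1/2-\delta)}\ll \ell^{1-2\delta}.
\]
For $n>N$, use the Riemann hypothesis: the contribution is at most
\[
 \sum_{n>N}\frac{|y|^n(1+2\ell q^{n/2})}{n}\ll_\delta \ell q^{-\delta N}+1\ll \ell^{1-2\delta}+1.
\]
Hence $\bigl|\log L_q(y,\chi)\bigr|\ll_{q,\delta}\ell^{1-2\delta}+1$, which is at most $\eps\ell\log q+O_{q,\delta,\eps}(1)$. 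Exponentiating gives $|L_q(y,\chi)|^{-1}\ll_{q,\delta,\eps}q^{\eps\ell}$, uniformly in $\chi$ and in $y$ on the circle. Small $\ell$ are absorbed into the implied constant.

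The only delicate point is balancing the cutoff. Using the RH bound alone costs a factor of $\ell$, while the trivial bound alone diverges beyond $|y|<q^{-1}$. The saving comes from the fact that the circle lies strictly inside $|y|<q^{-1/2}$ by the margin $q^{-\delta}$, which turns the tail into a factor $q^{-\delta N}$. I would also check that the trivial-zero exponent $e$ and any degree drop cause no issue; both contribute only $O(1)$ since $|y|\le q^{-1/2}$.
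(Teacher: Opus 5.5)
Your proof is correct and follows the same route as the paper. Both arguments expand $\log L_q(y,\chi)$, bound each coefficient by the smaller of the prime polynomial theorem bound $q^n$ and the Riemann hypothesis bound $2\ell q^{n/2}+O(1)$ (which accounts for the trivial zero from the even character), and split the sum at a cutoff logarithmic in $\ell$. The only difference is the cutoff: the paper cuts at $q^{j}\asymp\ell$ and gets $O(\ell^{1-\delta})$, while your balanced cutoff $q^{N/2}\asymp\ell$ gives $O(\ell^{1-2\delta})$, and either bound is $o(\ell)$, which is all that is needed.
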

\begin{proof}
Choose the logarithm vanishing at $y=0$ in $|y|<q^{-1/2}$.
The Euler product, using $\sum_{\deg f=j}\Lambda(f)=q^j$, and
the Riemann hypothesis give
\[
 |\log L_q(y,\chi)|\le
 \sum_{j\ge1}\frac{\min\{q^j,(2\ell+O(1))q^{j/2}+1\}}
 {j q^{(1/2+\delta)j}}.
\]
The second bound includes the root $1$ from the even character.
Splitting the sum at $j=\lfloor\log_q(\ell+2)\rfloor$ bounds it by
$O_{q,\delta}((\ell+2)^{1-\delta}+\log(\ell+2))=o(\ell)$,
uniformly in the argument of $y$. Exponentiation proves
\eqref{eq:inverseL}, with bounded conductors absorbed in the constant.
\end{proof}

Use the full coefficient torus
\begin{equation}\label{eq:torus}
|x|=q^{-1/2},\qquad |y|=q^{-1/2-\delta}.
\end{equation}
The Lindel\"of bound gives $|L_q(x,\chi)|\ll_{q,\eps}q^{\eps\ell}$. The finite factors over $P\mid E$ in \eqref{eq:RF} and \eqref{eq:SFfactor} contribute $\ll_{q,\delta,\eps}q^{\eps\deg E}$. Indeed, there are only finitely many primes below a fixed degree cutoff, while above a sufficiently large cutoff the logarithm of each absolute local cost is at most $\eps\deg P\log q$.

Since $q|xy|=q^{-\delta}<1$, Lemma~\ref{lem:inverseL} and \eqref{eq:SFfactor} imply
\begin{equation}\label{eq:SFbound}
|\mathcal S_F(x,y)|\ll_{q,\delta,\eps}q^{\eps\deg F}
\end{equation}
on \eqref{eq:torus}. When $G=1$, use $L_q(y,1)^{-1}=1-qy$ and the cancellation in \eqref{eq:RF}; the same estimate holds. For each $F$ the factorization is holomorphic on a neighborhood of the closed polydisk bounded by \eqref{eq:torus}. Hence double Cauchy extraction gives
\begin{equation}\label{eq:doubleCauchy}
\left|\frac1{(2\pi i)^2}\oint\oint
\mathcal S_F(x,y)\frac{dy\,dx}{x^{n+1}y^{m+1}}\right|
\ll_{q,\delta,\eps}q^{n/2+(1/2+\delta)m+\eps\deg F}.
\end{equation}

For $F\in\Hqq$ of degree $h$, we have $|G_{q^2}(1,F)|=q^h$, and there are at most $q^{2h}$ choices of $F$. Substituting \eqref{eq:doubleCauchy} into \eqref{eq:opened} gives
\[
|\Delta B_{nm}(k)|
\ll |1-q^{8-6k}|q^{n/2+(3/2+\delta)m}
\sum_{h\ge0}q^{(3-2\Re k+\eps)h}.
\]
On $\Re k=3/2+\kappa$, choose the loss in $h$ smaller than $2\kappa$ and then take $\delta$ as small as the desired loss in $m$. We obtain
\begin{equation}\label{eq:rightendpoint}
|\Delta B_{nm}(3/2+\kappa+i\theta)|
\ll_{q,\kappa,\eps}q^{n/2+(3/2+\eps)m}
\quad(\theta\in\mathbb R).
\end{equation}
Summing over $D$ before taking absolute values gives the improved bound \eqref{eq:rightendpoint}.

At $a=2/3+\kappa$, Lemma~\ref{pre:gauss-series}, applied to $M=ND$, gives
\begin{equation}\label{eq:leftendpoint}
|\Delta B_{nm}(a+i\theta)|
\ll_{q,\kappa,\eps}q^{(5/2-a+\eps)n+(7/2-a+\eps)m}.
\end{equation}
Here we counted $q^n$ choices for $N$ and at most $q^m$ for $D$, used $|G_{q^2}(N,D)|\le q^m$, and absorbed the factor from $qB_{n-1,m}$ in the constant.

\begin{proposition}\label{prop:averaged}
For every compact interval $I\subset(2/3,3/2)$ and every $\eps>0$,
\begin{equation}\label{eq:averaged}
|\Delta B_{nm}(k)|
\ll_{q,I,\eps}
q^{(29/10-8\Re k/5+\eps)n+(39/10-8\Re k/5+\eps)m},
\end{equation}
uniformly for $n,m\ge0$, $\Re k\in I$ and $\Im k\in\mathbb R$.
\end{proposition}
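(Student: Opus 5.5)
The bound \eqref{eq:averaged} should follow by convexity in $\Re k$, interpolating between the right-endpoint estimate \eqref{eq:rightendpoint} and the left-endpoint estimate \eqref{eq:leftendpoint} with the three lines theorem. First I check that the exponents are exactly the linear interpolants. In the $n$-exponent, the left endpoint $a\to2/3$ gives $5/2-2/3=11/6$ and the right endpoint $\Re k\to3/2$ gives $1/2$. The line through these two values has slope $(1/2-11/6)/(3/2-2/3)=-8/5$ and value $29/10-8\Re k/5$. In the $m$-exponent the values are $17/6$ and $3/2$, giving the same slope and the line $39/10-8\Re k/5$. So \eqref{eq:averaged} is precisely the convex combination of the two endpoint bounds, up to $\eps$ losses.

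Step one is regularity on a closed strip. Fix $n,m$ and a small $\kappa>0$. Then $\Delta B_{nm}(k)$ is a finite combination of the functions $(1-q^{8-6k})H(ND,k)$. By Lemma~\ref{pre:gauss-series}, each is rational in $z=q^{-2k}$ and holomorphic for $\Re k>2/3$. The only remaining denominators, $1-(q^4z^3)^{\deg\pi}$, vanish on $|z|=q^{-4/3}$, that is on $\Re k=2/3$ itself. Hence $\Delta B_{nm}$ is holomorphic on a neighbourhood of the closed strip $2/3+\kappa\le\Re k\le3/2+\kappa$. It is periodic in $\Im k$ with period $\pi/\log q$, so it is bounded on that strip. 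This boundedness justifies the three lines theorem, and no Phragm\'en--Lindel\"of growth condition is needed.

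Step two checks that \eqref{eq:rightendpoint} holds for this same function. That estimate was derived from the opened expansion \eqref{eq:opened}, valid for $\Re k>3/2$, and $\Delta B_{nm}$ is already a genuine holomorphic function there. So \eqref{eq:rightendpoint} holds on the line $\Re k=3/2+\kappa$, and \eqref{eq:leftendpoint} holds on $\Re k=2/3+\kappa$, both uniformly in $\Im k$.

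Step three applies the three lines theorem to
\[
\Phi(k)=\Delta B_{nm}(k)\,q^{(8k/5)(n+m)}\,q^{-(29/10)n-(39/10)m}.
\]
The multiplier is entire, has modulus $q^{(8\Re k/5)(n+m)}$, and is bounded on the strip. This converts the target exponent into a constant, up to the $O(\kappa)$ shifts at both ends. The endpoint bounds make $\log|\Phi|$ at most $O((\kappa+\eps)(n+m))+O_{q,\kappa,\eps}(1)$ on both boundary lines. By the three lines theorem, the same bound holds inside, which gives \eqref{eq:averaged} on $[2/3+\kappa,3/2+\kappa]$.

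For a compact $I\subset(2/3,3/2)$, choose $\kappa$ less than the distance from $I$ to $2/3$ and small compared with $\eps$. The endpoint shifts of order $\kappa$ then change the exponents by $O(\kappa)$, which is absorbed into $\eps$. The implied constants depend only on $q,I,\eps$ and not on $n,m$, because the endpoint bounds are uniform in $n,m$ and in $\Im k$.

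The main obstacle is the bookkeeping in step one rather than the interpolation itself. One must confirm that the poles of $(1-q^{8-6k})H(ND,k)$ lie only on $\Re k=2/3$, uniformly in $N,D$. One must also confirm that \eqref{eq:leftendpoint} really holds with constants uniform in $\Im k$ on a line $2/3+\kappa$, as Lemma~\ref{pre:gauss-series} asserts. With these two facts in hand, the convexity step is routine.
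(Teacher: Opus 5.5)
Your proposal is correct and is essentially the paper's proof. The paper applies Hadamard's three circles theorem in $z=q^{-2k}$ on the annulus $q^{-2b}\le|z|\le q^{-2a}$, with $a=2/3+\kappa$ and $b=3/2+\kappa$, which is equivalent to your three lines argument in $k$ (periodicity in $\Im k$ plays the role of compactness of the annulus); the endpoint exponent pairs $(11/6,17/6)$ and $(1/2,3/2)$, the $O(\kappa)$ displacement, and the choice $2/3+\kappa<\inf I$ all match.
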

\begin{proof}
Put $z=q^{-2k}$. The pole-free function $(1-q^{8-6k})H(M,k)$ is rational in $z$ and holomorphic for $\Re k>2/3$. The same is true of the finite sum $\Delta B_{nm}$. Apply Hadamard's three circles theorem on
\[
q^{-2b}\le|z|\le q^{-2a},\qquad a=2/3+\kappa,\quad b=3/2+\kappa.
\]
Both boundary estimates hold on full circles. With zero displacement, the exponent pairs are $(11/6,17/6)$ at $2/3$ and $(1/2,3/2)$ at $3/2$. Their linear interpolants are exactly the exponents in \eqref{eq:averaged}. The actual displaced interpolants differ by $O_I(\kappa)$, in addition to the arbitrarily small endpoint losses. Choose $\kappa$ and these losses after prescribing the final $\eps$, with $a<\inf I$. This proves the result. No estimate on the singular line $\Re k=2/3$ or uniformity as $\kappa\downarrow0$ is asserted.
\end{proof}

\subsection{Convergence region and convex hull}
After multiplication by $t^nq^{-2km}$, the two degree exponents in \eqref{eq:averaged} become
\begin{equation}\label{eq:weightedexponents}
\frac{19}{10}-\frac85\Re s-\frac35\Re w+\eps,
\qquad
\frac{39}{10}-\frac{18}{5}\Re(s+w)+\eps.
\end{equation}
Consequently the pole-free series in \eqref{eq:diff} converges normally on the tube over
\begin{equation}\label{eq:R3D}
R_{3,D}=\left\{(\sigma,\tau):
16\sigma+6\tau>19,\quad \frac{13}{12}<\sigma+\tau<\frac32\right\}.
\end{equation}
On every compact subset choose $\eps$ smaller than both positive margins in \eqref{eq:weightedexponents}; the resulting double geometric majorant proves normal convergence.

We give the connection to the initial function explicitly. The tubes over $R_1$ and $R_2$ overlap. Inside $R_2$, the range $\Re k>3/2$, $\Re w<0$ is a region of absolute convergence for the opened dual identity. The right endpoint estimate \eqref{eq:rightendpoint}, locally uniformly for $\Re k>3/2$, continues the same series normally to the tube over
\begin{equation}\label{eq:R3R}
R_{3,R}=\{(\sigma,\tau):\sigma+\tau>3/2,\ \tau<1/2\}.
\end{equation}
There is also an open overlap across $\Re k=3/2$. To see this, retain a fixed small displacement $\kappa>0$ in the exact interpolant between \eqref{eq:leftendpoint} and \eqref{eq:rightendpoint}. Its right edge is above $3/2$. At $\Re k=3/2$ and any fixed $\Re w<1/2$, the limiting weighted exponents are $\Re w-1/2<0$ and $-3/2<0$. For sufficiently small displacement both remain negative on an open neighborhood crossing this line. These neighborhoods connect the tubes over \eqref{eq:R3D} and \eqref{eq:R3R}.

On their absolute overlaps the definitions agree, so uniqueness glues \eqref{eq:commonproduct} to one holomorphic function on the connected union of these tubes. The factor $1-q^{w-1}$ removes the denominator in \eqref{eq:diff}, $1-q^{8-6(s+w)}$ removes the poles of $H(ND,s+w)$, and $1-q^{2-2s}$ clears the known main divisor.

Bochner's tube theorem and Lemma~\ref{pre:tube-generating} give the
following continuation.

\begin{proposition}\label{prop:continuation}
The product \eqref{eq:commonproduct} has a holomorphic continuation to
$S_4=T(R_4)$, where
\begin{equation}\label{eq:R4}
R_4=\left\{(\sigma,\tau):
\sigma>\frac12,\quad\sigma+\tau>\frac{13}{12},\quad14\sigma+9\tau>16\right\}.
\end{equation}
Equivalently, the product
\[
 (1-q^2u)(1-q^{-1}v^{-1})(1-q^8u^3v^6)A_3(u,v)
\]
is holomorphic on
\begin{equation}\label{eq:R4-generating}
 \mathcal D(R_4)=\left\{(u,v)\in(\mathbb C^\times)^2:
 |u|<q^{-1},\quad |u|^{1/2}|v|<q^{-13/12},\quad
 |u|^7|v|^9<q^{-16}\right\}.
\end{equation}
In particular,
\begin{equation}\label{eq:centrals}
(1-q^{2-2s})(1-q^{5-6s})A_3(s,1/2)
\quad\text{is holomorphic for }\Re s>\frac{23}{28},
\end{equation}
or equivalently on $0<|u|<q^{-23/14}$ in the variable $u$.
On the divisor $\Re(s+w)=4/3$, the region \eqref{eq:R4} contains
all points with $\Re w<8/15$.
\end{proposition}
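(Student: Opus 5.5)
The plan is to apply Lemma~\ref{pre:tube-generating} (together with Remark~\ref{pre:tube-application}) to the union of the tubes already glued together. These are the tubes over $R_1$, $R_2$, $R_{3,D}$ and $R_{3,R}$, together with the connecting neighborhoods across $\Re k=3/2$. On this connected union the product \eqref{eq:commonproduct} is one holomorphic function: on $S_1$ by \cite[Section~3]{HF}, on $S_2$ by normal convergence, and on the tubes over \eqref{eq:R3D} and \eqref{eq:R3R} by the normally convergent series \eqref{eq:diff}. The three factors clear the only denominators: $1-t$ in \eqref{eq:diff}, $1-q^{8-6k}$ from $H$, and $1-q^{2-2s}$ from the main divisor. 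The remaining poles of $H(ND,k)$, located at $1-(q^4z^3)^{\deg\pi}=0$, lie on $|z|=q^{-4/3}$, that is on $\Re k=2/3$, so they lie outside all regions used. Hence Lemma~\ref{pre:tube-generating} continues the product holomorphically to the tube over the convex hull of the union. It then suffices to show that this convex hull contains $R_4$.

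The main work is this convex-hull computation. Write $U$ for the union. Its relevant boundary pieces are the line $\sigma=1/2$ (from $R_1$, for $\tau>1$), the line $\sigma+\tau=13/12$ (from $R_{3,D}$), and the line $16\sigma+6\tau=19$ (the lower-left edge of $R_{3,D}$). The line $16\sigma+6\tau=19$ meets $\sigma+\tau=13/12$ at $\sigma=(19-6\cdot 13/12)/10=23/20$, $\tau=-1/15$. The upper portion of $R_{3,D}$ is $\sigma+\tau<3/2$, but $R_{3,R}$ and $R_2$ cover $\sigma+\tau>3/2$ with $\tau<1/2$ or $\sigma>1$, so that constraint disappears in the union. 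Next I join the corner $(1/2,1)$ of $R_1$ to the point of $\partial R_{3,D}$ where $16\sigma+6\tau=19$ meets $\sigma+\tau=3/2$, namely $\sigma=(19-9)/10=1$, $\tau=1/2$. The line through $(1/2,1)$ and $(1,1/2)$ is $\sigma+\tau=3/2$ itself, which is not the edge wanted. Instead, the hull edge $14\sigma+9\tau=16$ should be the supporting line through $(1/2,1)$ and the corner $(23/20,-1/15)$. I will check this: $14\cdot\tfrac12+9\cdot1=16$ and $14\cdot\tfrac{23}{20}-\tfrac{9}{15}=16.1-0.6=15.5$. These do not agree, so I will recompute which vertex of $R_{3,D}$ actually lies on $14\sigma+9\tau=16$, carefully determining the extreme points of the lower boundary of $U$ and taking the supporting lines between consecutive ones. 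I expect the vertices to be $(1/2,1)$ and the intersection of $14\sigma+9\tau=16$ with $\sigma+\tau=13/12$, namely $\sigma=(16-9\cdot13/12)/5=37/60$, $\tau=7/15$. Indeed, $16\cdot\tfrac{37}{60}+6\cdot\tfrac{7}{15}=9.87+2.8>19$, so this point lies inside the closure of $R_{3,D}$. The segment between $(1/2,1)$ and $(37/60,7/15)$ then lies in the hull, and every point of $R_4$ is a convex combination of points of $U$. I will do this step by an explicit elementary verification: each point of $R_4$ is written as a convex combination of a point of $R_1$ near $(1/2,1)$ and a point of $R_{3,D}$ near that vertex, or else lies directly in one of the pieces.

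The remaining statements are specializations. I rewrite $R_4$ in the variables $|u|=q^{-2\sigma}$ and $|v|=q^{-\tau}$ to obtain \eqref{eq:R4-generating}; the factor $1-q^{8-6k}$ becomes $1-q^8u^3v^6$. At $\tau=1/2$ the three inequalities become $\sigma>1/2$, $\sigma>7/12$ and $14\sigma>23/2$, that is $\sigma>23/28$. This gives \eqref{eq:centrals}, where $1-q^{-1}v^{-1}$ is a nonzero constant and is dropped. The point $u=0$ is filled in as in Remark~\ref{pre:tube-application}. Finally, on $\sigma+\tau=4/3$ the condition $14\sigma+9\tau>16$ becomes $14(4/3-\tau)+9\tau>16$, that is $\tau<8/15$; the conditions $\sigma>1/2$ and $\sigma+\tau>13/12$ are then automatic. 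The main obstacle is identifying the correct extreme points of the union, together with the unproved claim that the connecting neighborhoods make $U$ connected with no further gaps.
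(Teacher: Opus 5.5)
Your strategy is the paper's: glue \eqref{eq:commonproduct} on the connected union $U$ of the tubes over $R_1$, $R_2$, $R_{3,D}$, $R_{3,R}$ and the bridging neighborhoods across $\Re k=3/2$, then apply Lemma~\ref{pre:tube-generating}. Your translation to \eqref{eq:R4-generating}, the bound $\Re s>23/28$ at $\tau=1/2$, and $\tau<8/15$ on $\sigma+\tau=4/3$ are all correct.

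The gap is in the convex-hull step, which is the only real content here, and as written it fails.
\begin{itemize}
\item The corner of $R_{3,D}$ where $16\sigma+6\tau=19$ meets $\sigma+\tau=13/12$ is $G=(5/4,-1/6)$, not $(23/20,-1/15)$. Indeed $6\cdot\tfrac{13}{12}=\tfrac{13}{2}$, so $\sigma=(19-\tfrac{13}{2})/10=\tfrac54$.
\item At $G$ one has $14\cdot\tfrac54+9\cdot(-\tfrac16)=\tfrac{35}{2}-\tfrac32=16$. So your first guess was the correct supporting line, and you discarded it because of an arithmetic slip.
\item Your replacement vertex is also miscomputed. The lines $14\sigma+9\tau=16$ and $\sigma+\tau=13/12$ meet again at $G$, since $(16-\tfrac{39}{4})/5=\tfrac54$, not $\tfrac{37}{60}$.
\item The inequality you use to place $(37/60,7/15)$ in $\overline{R_{3,D}}$ is false: $16\cdot\tfrac{37}{60}+6\cdot\tfrac{7}{15}=\tfrac{38}{3}<19$.
\end{itemize}
In fact $(37/60,7/15)$ is not in $\operatorname{conv}U$ at all, because $14\cdot\tfrac{37}{60}+9\cdot\tfrac{7}{15}=\tfrac{77}{6}<16$. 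Every piece of $U$ lies in the open half-plane $14\sigma+9\tau>16$; on $R_{3,D}$, for instance, $14\sigma+9\tau=\tfrac12(16\sigma+6\tau)+6(\sigma+\tau)>\tfrac{19}{2}+\tfrac{13}{2}$. So the segment you propose is not in the hull. The region it would bound contains points such as $(3/5,3/5)$, which lie outside $R_4$.

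The repair is short and reproduces Figure~\ref{fig:hull}.
\begin{itemize}
\item The relevant extreme points are $A=(1/2,1)$ and $G=(5/4,-1/6)$. The other corner $(1,1/2)$ of $R_{3,D}$ is not extreme, because $14+\tfrac92>16$.
\item The closure of $U$ contains the ray $A+\mathbb R_{\ge0}(0,1)\subset\overline{R_1}$.
\item It also contains the ray $G+t(1,-1)$, $t\ge0$. This ray lies in $\overline{R_{3,D}}$, because along it $\sigma+\tau=13/12$ and $16\sigma+6\tau=19+10t$.
\item The closed convex hull of these two rays is $\operatorname{conv}\{A,G\}+\operatorname{cone}\{(0,1),(1,-1)\}$. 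This equals $\overline{R_4}$, since $A$ and $G$ are the only vertices of $\overline{R_4}$ and its recession cone is generated by $(0,1)$ and $(1,-1)$.
\item Since $\operatorname{conv}U$ is open and convex and its closure contains $\overline{R_4}$, it contains $\operatorname{int}\overline{R_4}=R_4$.
\end{itemize}
With this correction the rest of your argument goes through. The connectedness you flag as unproved is supplied by the overlap construction across $\Re k=3/2$ given just before the proposition, so you may quote it.
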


\begin{figure}[tb]
\centering
\begin{tikzpicture}[x=5.8cm,y=3.5cm,>=stealth]
\fill[blue!5] (.5,1) -- (.5,1.5) -- (1.55,1.5) -- (1.55,-.466667) -- (1.25,-.166667) -- cycle;
\draw[->] (.36,-.48)--(1.6,-.48) node[right]{$\Re s$};
\draw[->] (.36,-.48)--(.36,1.55) node[above]{$\Re w$};
\draw[thick,blue!65!black] (.5,1.5)--(.5,1)--(1.25,-.166667)--(1.55,-.466667);
\draw[dashed,gray] (.5,1)--(1.55,-.4);
\draw[dotted] (.4,.5)--(1.5,.5) node[right]{$w=1/2$};
\fill (.5,1) circle(1.5pt) node[above right]{$A=(1/2,1)$};
\fill (1.25,-.166667) circle(1.5pt) node[above right]{$G=(5/4,-1/6)$};
\fill[blue!65!black] (.82142857,.5) circle(1.6pt);
\node[below left] at (.82142857,.5) {$23/28$};
\fill[gray] (.875,.5) circle(1.4pt);
\node[above right] at (.875,.5) {$7/8$};
\node[blue!65!black] at (1.22,1.12) {$R_4$};
\end{tikzpicture}
\caption{The relevant convex hull in the Dirichlet variables. The solid segment is $14\Re s+9\Re w=16$; the dashed line is the previous boundary $4\Re s+3\Re w=5$. The new central boundary lies to the left of $5/6$ by $1/84$.}
\label{fig:hull}
\end{figure}

\subsection{The residues of \texorpdfstring{$A_3(s,w)$}{A3(s,w)}}
\label{subsec:full-residues}

We identify the residues of $A_3(s,w)$ with the
arithmetic factors evaluated in \cite[pp.~44--46, (66)--(67)]{DFL}.
The required steps are to justify taking residues through the $N,D$
sum and to match its variables with those of the cited formulas.
Put $z=q^{-2(s+w)}$ and, for $\zeta^3=1$, write
\[
 z_\zeta=q^{-8/3}\zeta^{-1},\qquad
 u_\zeta(w)=q^{-8/3+2w}\zeta^{-1}.
\]
Thus $z=u q^{-2w}$, and the normalized residue $-\Res A_3/u_\zeta(w)$
is unchanged if the residue is instead taken in $z$ and divided by
$z_\zeta$.  We first work with $\Re w<0$.

Write uniquely
\[
 N=N_1N_2^2N_3^3,
 \qquad N_1,N_2\in\Hq,\quad (N_1,N_2)=1,
\]
where $N_3$ is monic and need not be coprime to $N_1N_2$.
If $D\in\Hq$ and $(D,N)=1$, the cubefree factors of $ND$ are
$N_1D$ and $N_2$.
Regard $H(ND,k)$ as a function of $z=q^{-2k}$. Then
Lemma~\ref{pre:rational-residue} gives, for each $\zeta^3=1$,
\begin{equation}\label{eq:H-principal-parts}
\begin{split}
 -\frac1{z_\zeta}\Res_{z=z_\zeta}H(ND,k)
 ={}&\frac{\mathbf 1_{N_2=1}\,
 q^{-(\deg N_1+\deg D)/3}}{3\zeta_{q^2}(2)}
 (1+q^{1/3}\zeta^2)\zeta^{\deg N_1+\deg D}\\
 &\times\prod_{\substack{\pi\text{ prime in }\Fqq[T]\\\pi\mid ND}}
       (1+|\pi|_2^{-1})^{-1}.
\end{split}
\end{equation}

The residues of $A_3(s,w)$ can be evaluated as follows.
The averaged estimate for $\Delta B_{nm}$ used in
Proposition~\ref{prop:continuation} makes the pole-free series
\eqref{eq:diff} normally convergent near each $z_\zeta$ when
$\Re w<0$: at $\Re(s+w)=4/3$ its two geometric exponents are
$\Re w-7/30$ and $-9/10$, before arbitrarily small losses.
Consequently the residue of $A_3(s,w)$ can be computed term by term in the blocks of fixed degree.  After \eqref{eq:H-principal-parts} is inserted, the resulting
individual $N,D$ residue series is absolutely convergent for
$\Re w<1/3$.  In fact its absolute value is majorized, up to a fixed
constant, by
\[
 \sum_{N_1\in\Hq}\sum_{N_3\in\Aq}
 |N_1|^{\Re w-4/3}|N_3|^{3\Re w-3}
 \sum_{D\in\Hq}|D|^{-2},
\]
which converges for $\Re w<1/3$.
We may thus undo the first difference by its exact generating identity.
This recovers the prefactor in \eqref{eq:dual}; it does not assume normal
convergence of \eqref{eq:dual} at the pole.

At $z=z_\zeta$ the powers depending on $D$ in \eqref{eq:dual} and
\eqref{eq:H-principal-parts} combine as
\[
 q^{\deg D}z_\zeta^{\deg D}
 q^{-\deg D/3}\zeta^{\deg D}=q^{-2\deg D}.
\]
Moreover $\zeta^{\deg N_1}=\zeta^{\deg N}$ whenever $N_2=1$.
It remains to evaluate the absolutely convergent arithmetic sum
\begin{align}
 &\sum_{\substack{N\in\Aq\\N_2=1}}
 |N|^{w-1}q^{-\deg N_1/3}\zeta^{\deg N}
 \prod_{\pi\mid N}(1+|\pi|_2^{-1})^{-1}
 \sum_{\substack{D\in\Hq\\(D,N)=1}}
 \mu(D)|D|^{-2}\prod_{\pi\mid D}(1+|\pi|_2^{-1})^{-1}.
 \label{eq:residue-arithmetic-sum}
\end{align}
Products over $\pi$ in this formula are over primes of $\Fqq[T]$.

Put $X=q^{w-1/2}\zeta$. The definitions of
$\mathcal J_{\mathrm{nK}}$ and $\mathcal H_{\mathrm{nK}}$ in
\cite[pp.~44--45]{DFL} identify
\eqref{eq:residue-arithmetic-sum} with
$\mathcal J_{\mathrm{nK}}(1)\mathcal H_{\mathrm{nK}}(X,1)$:
their polynomial $f$ is our $N$, and their second auxiliary
variable is set equal to $1$. In their notation the corresponding
factors are $C_P(1)=1$ and $D_P(1)=1+2/|P|$.
We now apply \cite[(66)--(67)]{DFL} directly to obtain
\begin{equation}\label{eq:K-identification}
 \mathcal J_{\mathrm{nK}}(1)\mathcal H_{\mathrm{nK}}(X,1)
 =\frac{\mathcal J_{\mathrm{nK}}(1)\mathcal B_{\mathrm{nK}}(X,1)}
        {1-q^{1/6}X}
 =\frac{\mathcal K_{\mathrm{nK}}(X)}{1-q^{1/6}X}.
\end{equation}
Here $\mathcal K_{\mathrm{nK}}$ is the function already recorded
in \eqref{eq:KnK}; its absolute convergence for $|X|<q^{1/6}$
is given in \cite[p.~46]{DFL}.

Combining \eqref{eq:dual} and
\eqref{eq:H-principal-parts}--\eqref{eq:K-identification} now gives
\begin{align}
 -\frac{1}{u_\zeta(w)}\Res_{u=u_\zeta(w)}A_3(u,q^{-w})
 &=\frac{q^{2w-1}(1-q^{-w})}{1-q^{w-1}}
   \frac{1+q^{1/3}\zeta^2}{3\zeta_{q^2}(2)}
   \frac{\mathcal K_{\mathrm{nK}}(q^{w-1/2}\zeta)}
        {1-q^{w-1/3}\zeta}.
 \label{eq:full-residue-general-w}
\end{align}
Initially this identity holds for $\Re w<0$.
By Proposition~\ref{prop:continuation}, its left side is meromorphic
on the connected parameter domain $\Re w<8/15$ along the polar
divisor. The right side is meromorphic there because
$|q^{w-1/2}|<q^{1/6}$. The identity theorem extends the equality
to this domain, including $w=1/2$. Intersections with the main
divisor, such as $w=1/3$ on the real branch, are interpreted
meromorphically.

At $w=1/2$ the first factor on the right side of
\eqref{eq:full-residue-general-w} is $1$.
With $\omega=e^{2\pi i/3}$ and $u_j=q^{-5/3}\omega^j$, we obtain
\begin{equation}
 c_j=-\frac{1}{u_j}\Res_{u=u_j}A_3(u,q^{-1/2})
 =\frac{(1+q^{1/3}\omega^{-2j})
          \mathcal K_{\mathrm{nK}}(\omega^{-j})}
        {3\zeta_{q^2}(2)(1-q^{1/6}\omega^{-j})},
 \qquad j=0,1,2.
 \label{eq:explicit-cj}
\end{equation}
\begin{remark}\label{rem:secondary-residues}
Each product value in \eqref{eq:explicit-cj} is nonzero.
For each prime $P$, put $R=|P|$, $d=\deg P$ and
$Z=X^dR^{-5/6}$. For $X^3=1$, one has $|Z|=R^{-5/6}<1$.
The odd local factors in \eqref{eq:KnK} cannot vanish.
For an even local factor, its second numerator multiplied by $R$
is $R+2+RZ-2R^{-3/2}$, and
\[
 R+2-2R^{-3/2}>R>|RZ|,
\]
so its remaining numerator cannot vanish either.
Absolute convergence then proves
$\mathcal K_{\mathrm{nK}}(X)\ne0$ for $X^3=1$.
At $X=1$ every local factor is positive, and hence
\begin{equation}
 c_0=-\frac{(1+q^{1/3})\mathcal K_{\mathrm{nK}}(1)}
              {3\zeta_{q^2}(2)(q^{1/6}-1)}<0.
 \label{eq:c0-negative}
\end{equation}
The periodic combination occurring in Theorem~\ref{thm:main}
is therefore not identically zero, by the invertibility of the
Fourier transform on three points.  This argument does not require, or
assert, its nonvanishing in every residue class of the genus.
The relation $c_2=\overline{c_1}$ also shows that this combination is real.
\end{remark}

\subsection{Proof of Theorem~\ref{thm:main}}

By Proposition~\ref{prop:continuation} and
Lemma~\ref{pre:tube-generating},
\[
 (1-q^2u)(1-q^5u^3)A_3(u,q^{-1/2})
\]
is holomorphic in $|u|<q^{-23/14}$.
The lemma gives the continuation on the punctured
disc, and the original power series supplies the holomorphic
extension at $u=0$, as in Remark~\ref{pre:tube-application}.
Thus the continued function agrees with the original generating
series wherever \eqref{eq:perron} is initially evaluated.
For completeness, if $m=g/2+1$ and $r=q^{-2\sigma}$ with
$\sigma>1$, the change of variables gives exactly
\begin{equation}\label{eq:perron-logarithmic}
 \frac{1}{2\pi i}\oint_{|u|=r}
       A_3(u,q^{-1/2})\frac{du}{u^{m+1}}
 =\frac{\log q}{\pi i}
   \int_{\sigma}^{\sigma+\pi i/\log q}
       A_3(s,1/2)q^{2ms}\,ds.
\end{equation}
The circle on the left is positively oriented; the upward period
segment on the right maps to a clockwise circle, cancelling the
minus sign in $du/u=-2\log q\,ds$. Hence the passage through the
Dirichlet variables leaves the coefficient normalization unchanged.
Choose $0<\eps<1/84$ and set $R=q^{-2(23/28+\eps)}$,
so that $q^{-2}<q^{-5/3}<R<q^{-23/14}$.
For $m=g/2+1$, Perron's formula and the outward contour shift give
\begin{align}
 \sum_{\substack{\chi\text{ primitive cubic}\\\operatorname{genus}(\chi)=g}}
 L_q(1/2,\chi)
 &=\frac1{2\pi i}\int_{|u|=R}
       A_3(u,q^{-1/2})\frac{du}{u^{m+1}}
   -q^{2m+2}\Res_{u=q^{-2}}A_3(u,q^{-1/2})
 \nonumber\\[-1mm]
 &\hspace{10mm}
   -\sum_{j=0}^2u_j^{-m-1}\Res_{u=u_j}A_3(u,q^{-1/2}).
 \label{eq:final-contour}
\end{align}
The minus signs follow because the outer integral minus the inner
integral is the sum of the crossed residues.

The known principal residue from \cite{HF}, equivalently the main
constant in \cite[Theorem~1.1]{DFL}, is
\[
 -q^2\Res_{u=q^{-2}}A_3(u,q^{-1/2})
 =\frac{\zeta_q(3/2)}{\zeta_q(3)}
   \mathcal A_{\mathrm{nK}}(q^{-2},q^{-3/2}).
\]
Its contribution in \eqref{eq:final-contour} is therefore
$q^{g+2}\zeta_q(3/2)\mathcal A_{\mathrm{nK}}(q^{-2},q^{-3/2})/
\zeta_q(3)$.
By \eqref{eq:explicit-cj}, the other three residues contribute
\[
 \sum_{j=0}^2c_j u_j^{-m}
 =q^{5g/6+5/3}
   \sum_{j=0}^2c_j\omega^{-j(g/2+1)}.
\]
In particular the genus dependence of the secondary coefficient is
periodic modulo $6$.

Finally the pole-free product $(1-q^2u)(1-q^5u^3)A_3(u,q^{-1/2})$ is bounded on the fixed
circle $|u|=R$, and
\[
 |1-q^2u|\ge q^2R-1>0,\qquad
 |1-q^5u^3|\ge q^5R^3-1>0.
\]
Thus $\sup_{|u|=R}|A_3(u,q^{-1/2})|\ll_{q,\eps}1$, and the integral
in \eqref{eq:final-contour} is
\[
 O_{q,\eps}(R^{-m})
 =O_{q,\eps}\bigl(q^{(23/28+\eps)g}\bigr).
\]
Together with \eqref{eq:explicit-cj} and \eqref{eq:c0-negative}, this
proves Theorem~\ref{thm:main}.

\section{Twisted first moments}\label{sec:twisted}

\subsection{The arithmetic factors of the twisted formula}
For a monic $h\in\Aq$, write uniquely
\[
 h=h_1h_2^2h_3^3,\qquad h_1,h_2\in\Hq,\quad(h_1,h_2)=1,
\]
where $h_3$ need not be coprime to $h_1h_2$.  Put $m=g/2+1$ and
\[
 C_q=\frac{\zeta_q(3/2)}{\zeta_q(3)}
       \mathcal A_{\mathrm{nK}}(q^{-2},q^{-3/2}).
\]
Define
\begin{equation}\label{tw:C}
 C_q(h)=\frac{C_q}{|h_1|\,|h_2|^{1/2}}
 \prod_{\substack{P\mid h\\\deg P\ {\rm even}}}
 \left(1+\frac2{|P|}-\frac2{|P|^{5/2}}\right)^{-1}.
\end{equation}
Here and below products over $P$ are over monic primes in $\Fq[T]$.
For such a prime put $R=|P|$, $d=\deg P$, and
\[
 D_{P,j}=
 \begin{cases}
 1+R^{-5/6}\omega^{-jd},&d\text{ odd},\\
 1+2R^{-1}+R^{-5/6}\omega^{-jd}-2R^{-5/2},&d\text{ even}.
 \end{cases}
\]
Define
\begin{equation}\label{tw:local-secondary}
 c_j(h)=c_j\prod_{P^e\parallel h}\left[
 \frac1{D_{P,j}}
 \begin{cases}
 1+R^{-5/6}\omega^{-jd},&e\equiv0\pmod3,\\
 R^{-1/2}(1+R^{-5/6}\omega^{-jd}),&e\equiv1\pmod3,\\
 R^{-1/3}\omega^{-jd}+R^{-1},&e\equiv2\pmod3
 \end{cases}\right],
\end{equation}
where $c_j$ is defined in \eqref{eq:cj}.

\subsection{The dual series with a twist}
Define
\[
 A_{3,h}(s,w)=
 \sum_{\substack{F\in\Hqq\\
 F\text{ has no nonconstant divisor in }\Fq[T]}}
 \frac{\chi_F(h)L_q(w,\chi_F)}{|F|_2^s}.
\]
The character is zero when $(F,h)\ne1$. For $k=s+w$,
the functional equation and cubic twisting give
\begin{equation}\label{tw:dual}
 A_{3,h}(s,w)
 =\frac{q^{2w-1}(1-q^{-w})}{1-q^{w-1}}
 \sum_{\substack{N,D\in\Aq\\(N,D)=1,\ (D,h)=1}}
 |N|^{w-1}\mu(D)q^{\deg D}|D|_2^{-k}H(NDh^2,k).
\end{equation}
Indeed $\chi_F(h)\overline{\chi_F(ND)}=
\overline{\chi_F(NDh^2)}$ when $(F,h)=1$.
Since $D,h\in\Fq[T]$, the value $\chi_D(h)$ equals $1$ if $(D,h)=1$ and is
zero otherwise.  There is no condition $(N,h)=1$.
This identity initially holds on the same overlap of absolute convergence as \eqref{eq:dual}.

In the following block estimates, the index $m$ denotes $\deg D$, as in Section~3.
Define $B_{nm,h}$ by replacing $H(ND,k)$ with $H(NDh^2,k)$ in
\eqref{eq:Bnm} and imposing $(D,h)=1$, and put
$\Delta B_{nm,h}=B_{nm,h}-qB_{n-1,m,h}$.
The identity \eqref{eq:diff} holds with these
replacements and with $A_3$ replaced by $A_{3,h}$.

On opening the Gauss series, only $F$ coprime to $h$ remain.
For $F=EG$ as in Section~3, the extra factor is
$\chi_F(h)$, of modulus one.  The product in \eqref{eq:SFproduct} becomes
\[
 R_F(x)\prod_{P\nmid Eh}
 \bigl(1-\chi(P)y^{\deg P}+\chi(P)^2(xy)^{\deg P}\bigr),
 \qquad \chi=\overline{\chi_G}|_{\Fq[T]}.
\]
Its factorization as in \eqref{eq:SFfactor} has the additional
finite factors $(1-\chi(P)y^{\deg P})^{-1}$ for $P\mid h$, while
those primes are omitted from the convergent correction product.
On the coefficient torus \eqref{eq:torus}, these finite factors are
$\ll_{q,\delta,\eps}|h|^\eps$.  The same argument used for the
prime factors of $E$ proves this bound.  Therefore
\begin{equation}\label{tw:right-endpoint}
 |\Delta B_{nm,h}(3/2+\kappa+i\theta)|
 \ll_{q,\kappa,\eps}
 |h|^\eps q^{n/2+(3/2+\eps)m}.
\end{equation}

At $a=2/3+\kappa$, Lemma~\ref{pre:gauss-series} instead gives
\begin{equation}\label{tw:left-endpoint}
 |\Delta B_{nm,h}(a+i\theta)|
 \ll_{q,\kappa,\eps}
 |h|^{3-2a+\eps}
 q^{(5/2-a+\eps)n+(7/2-a+\eps)m}.
\end{equation}
Here $|NDh^2|_2=q^{2(n+m)}|h|^4$.
The interpolation on displaced annuli from Section~3 consequently gives
\begin{equation}\label{tw:block}
 |\Delta B_{nm,h}(k)|
 \ll_{q,I,\eps}
 |h|^{3-2\Re k+\eps}
 q^{(29/10-8\Re k/5+\eps)n+(39/10-8\Re k/5+\eps)m},
\end{equation}
uniformly on compact $I\subset(2/3,3/2)$. Thus $R_{3,D}$ remains
available, with a uniform bound
$\ll|h|^{3-2\Re(s+w)+\eps}$ for the pole-free product \eqref{eq:commonproduct} with $A_3$ replaced by $A_{3,h}$ on
each compact real subregion and all imaginary parts.

\subsection{Uniform control through the convex hull}
Write
\[
 F_h(s,w)=(1-q^{2-2s})(1-q^{w-1})
          (1-q^{8-6(s+w)})A_{3,h}(s,w).
\]
On the initial region $R_2$ the original series and
$|\chi_F(h)|\le1$ give bounds independent of $h$, apart from the
term with conductor $1$, whose pole has already been removed.
On every compact real subregion of $R_1$ one has, uniformly in the
imaginary parts,
\begin{equation}\label{tw:R1-bound}
 |F_h(s,w)|\ll_{q,\eps}|h|^\eps.
\end{equation}
The implicit constant also depends on the compact subregion.
To prove this, interchange sums as in \cite[Section~3.2]{HF},
replacing $N$ by $hN$ inside the character. For a prime of even degree
$P=\pi\pi^\sigma$, put $a=\chi_\pi(hN)$,
$b=\chi_{\pi^\sigma}(hN)$ and $z=u^{\deg P/2}$.
Extracting $L_{q^2}(s,\chi_{hN})$ from the family sum leaves
\[
 (1+(a+b)z)(1-az)(1-bz)=1+O(|u|^{\deg P});
\]
at a prime of odd degree the correction is
$1-\chi_P(hN)u^{\deg P}$. The correction product is therefore
uniformly bounded on compact subsets of $|u|<q^{-1}$.
The inducing conductor divides $hN$, so Lindel\"of and the finite
imprimitive factors give $O(|hN|^\eta)$ for every $\eta>0$.
If $hN$ is a cube, $1-q^{2-2s}$ removes the principal pole and
the same bound holds. Summing $|N|^{-\Re w+\eta}$ with
$\eta<\Re w-1$ proves \eqref{tw:R1-bound}.

The overlap argument of Section~3 now continues $F_h$ to $T(R_4)$
for each $h$.  To propagate its bounds, fix a small $\delta>0$ and
put
\[
 A_\delta=(1/2+\delta,1+\delta),\qquad
 G_\delta=(5/4+\delta,-1/6+\delta).
\]
The first point lies in $R_1$ and the second in $R_{3,D}$.
Their endpoint bounds are
\[
 |F_h(A_\delta+i\boldsymbol\theta)|\ll |h|^\eta,
 \qquad
 |F_h(G_\delta+i\boldsymbol\theta)|
       \ll |h|^{5/6-4\delta+\eta},
\]
uniformly for $\boldsymbol\theta\in\mathbb R^2$.
Apply Hadamard's three lines theorem to
\[
 z\longmapsto
 F_h\bigl(A_\delta+z(G_\delta-A_\delta)
                         +i\boldsymbol\theta\bigr),
 \qquad 0\le\Re z\le1.
\]
For fixed $h$, Remark~\ref{pre:tube-application} bounds this strip
function, since its real parts lie on the compact segment joining
$A_\delta$ and $G_\delta$. The three lines theorem therefore
propagates the uniform endpoint bounds.

For
\[
 \lambda=\frac37+\frac{6\delta}7,
 \qquad
 (1-\lambda)A_\delta+\lambda G_\delta
 =\left(\frac{23}{28}+\frac{23\delta}{14},\frac12\right),
\]
the exponent of $|h|$ is
\[
 \lambda(5/6-4\delta)+\eta\le\frac5{14}+\eta.
\]
Returning to $m=g/2+1$, choose $\delta$ and $\eta$ sufficiently small in terms of the
final $\eps$. This gives a uniform bound for $F_h$ on the central
coefficient circle with radius
$q^{-2(23/28+23\delta/14)}$.
After division by the clearing factors, which are bounded away from
zero on that circle, its integral in the coefficient formula is
\begin{equation}\label{tw:error}
 O_{q,\eps}\!\left(q^{(23/14+\eps)m}|h|^{5/14+\eps}\right).
\end{equation}

\subsection{The first and secondary residues}
The first residue in the interchanged expression comes from $hN$
being a cube.  Put
\[
 c_q=\prod_{\deg P\ {\rm odd}}(1-|P|^{-2})
     \prod_{\deg P\ {\rm even}}
              (1-|P|^{-1})^2(1+2|P|^{-1}).
\]
This is the leading constant for the number of primitive characters:
the generating product for the family is
$\prod_{\deg P\ {\rm even}}(1+2u^{\deg P/2})$.
Excluding a prime of even degree $P$ from the conductor multiplies its
normalized residue at $u=q^{-2}$ by $(1+2/|P|)^{-1}$.
Consequently the first residue constant is equivalently
\begin{equation}\label{tw:C-diagonal}
 C_q(h)=c_q
 \sum_{\substack{N\in\Aq\\hN\ {\rm a\ cube}}}
 \frac1{|N|^{1/2}}
 \prod_{\substack{P\mid hN\\\deg P\ {\rm even}}}
 (1+2/|P|)^{-1}.
\end{equation}
The condition forces $N=h_1^2h_2K^3$.
Summing over $K$ prime by prime and comparing with $h=1$ gives
exactly \eqref{tw:C}.  This also proves $C_q(h)>0$ and
\[
 \frac{|h|^{-\eps}}{|h_1||h_2|^{1/2}}
 \ll_{q,\eps} C_q(h)
 \le \frac{C_q}{|h_1||h_2|^{1/2}}.
\]

For the secondary residues, apply Lemma~\ref{pre:rational-residue}
to $NDh^2$.  The termwise evaluation is justified first for
$\Re w<0$ by \eqref{tw:block}, exactly as in
Section~\ref{subsec:full-residues}; the resulting residue series is
absolutely convergent for $\Re w<1/3$ for each fixed $h$.
The residue identity then continues along the same divisor to
$w=1/2$.  It remains to describe what changes at a prime dividing $h$.

Fix $P^e\parallel h$. This prime cannot divide $D$; if its
exponent in $N$ is $n$, its exponent in $NDh^2$ is $n+2e$.
Lemma~\ref{pre:rational-residue} and \eqref{pre:beta} give the
following quotient of the twisted and untwisted local residues:
\[
 \frac{1-R^{-3/2}}{D_{P,j}}\left(
 \sum_{\substack{n\ge0\\n+2e\equiv0\ (3)}}R^{-n/2}
 +R^{-1/3}\omega^{-jd}
 \sum_{\substack{n\ge0\\n+2e\equiv1\ (3)}}R^{-n/2}\right).
\]
Evaluating the geometric sums gives \eqref{tw:local-secondary}.
Thus $c_j(h)$ is the normalized residue of $A_{3,h}$ at
$u=q^{-5/3}\omega^j$.

All factors in \eqref{tw:local-secondary} are nonzero. For the
third numerator this follows from $R^{-1/3}>R^{-1}$.
The other numerator and denominator assertions follow as in the
untwisted calculation.  After removing $R^{-1/2}$ in residue class
$1$ and $R^{-1/3}\omega^{-jd}$ in residue class $2$, each local
factor is $1+O(R^{-2/3})$, uniformly in $j$ and $e$.
A finite product over the distinct primes dividing $h$ is then
$\ll_{q,\eps}|h|^\eps$, by separating finitely many primes of small degree from the remaining ones.  This proves
\eqref{tw:secondary-bound}.  The same argument with reciprocals
also supplies a lower bound with $|h|^{-\eps}$.
For $j=0$ all finite factors are positive, hence $c_0(h)<0$;
conjugation gives $c_2(h)=\overline{c_1(h)}$.
The contour argument of Section~3, with \eqref{tw:error}, now proves
Theorem~\ref{thm:twisted}.

\subsection{Summation against short Dirichlet polynomials}
Suppose that complex coefficients $a(h)$ satisfy
$|a(h)|\ll_\eta |h|^\eta$ for every $\eta>0$, with constants
uniform over the polynomials under consideration.  They may depend
on $m$, provided these bounds remain uniform.  For every $L\ge0$,
Theorem~\ref{thm:twisted} implies
\begin{equation}\label{tw:summed-error}
 \begin{split}
 &\sum_{\deg h\le L}\frac{a(h)}{|h|^{1/2}}
   \sum_{\substack{\chi\ {\rm primitive\ cubic}\\
                   \operatorname{genus}(\chi)=g}}
     \chi(h)L_q(1/2,\chi)\\
 &\quad=q^{2m}\sum_{\deg h\le L}
                    \frac{a(h)C_q(h)}{|h|^{1/2}}
 +O_{q,\eps,a}\!\left(
 q^{5m/3+\eps L}
 +q^{(23/14+\eps)m+(6/7+\eps)L}\right).
 \end{split}
\end{equation}
To bound the secondary sum, write
$\deg h_1=a$, $\deg h_2=b$, and $\deg h_3=c$.
There are at most $q^{a+b+c}$ such triples.  Combining
$|h|^{-1/2}$ with \eqref{tw:secondary-bound} leaves the factor
\[
 q^{a+b+c}q^{-(a+2b+3c)/2}q^{-a/2-b/3}
 =q^{-b/3-c/2}.
\]
Summation over $a+2b+3c\le L$ is $O_q(L+1)$, which is absorbed
into $q^{\eps L}$ after rescaling $\eps$.
For the remainder, summing by $d=\deg h$ gives
\[
 \sum_{d\le L}q^dq^{-d/2}q^{(5/14+\eps)d}
 \ll_{q,\eps}q^{(6/7+\eps)L}.
\]
Thus, for every fixed $\delta>0$, the error in
\eqref{tw:summed-error} is $O(q^{(2-\eta)m})$ for some $\eta>0$
whenever $L\le(5/12-\delta)m$, with $\eps$ chosen sufficiently
small.

\end{document}